\documentclass[reqno]{amsart}

\usepackage{amsmath}
\usepackage{amssymb}

\numberwithin{equation}{section}

\newtheorem{Theorem}{Theorem}[section]
\newtheorem{Lemma}[Theorem]{Lemma}
\newtheorem{Proposition}[Theorem]{Proposition}

\theoremstyle{definition}

\newtheorem{Remark}[Theorem]{Remark}
\newtheorem{Example}[Theorem]{Example}

\newcommand{\thref}[1]{Theorem \ref{#1}}
\newcommand{\leref}[1]{Lemma \ref{#1}}
\newcommand{\prref}[1]{Proposition \ref{#1}}
\newcommand{\reref}[1]{Remark \ref{#1}}

\newcommand{\seref}[1]{Section \ref{#1}}

\newcommand{\Cset}{\mathbb{C}}
\newcommand{\Rset}{\mathbb{R}}
\newcommand{\Zset}{\mathbb{Z}}
\newcommand{\Nset}{\mathbb{N}}

\newcommand{\Id}{\mathrm{Id}}
\newcommand{\Span}{\mathrm{span}}
\newcommand{\Wr}{\mathrm{Wr}}

\newcommand{\al}{{\alpha}}
\newcommand{\be}{{\beta}}
\newcommand{\la}{{\lambda}}
\newcommand{\ka}{{\kappa}}
\newcommand{\de}{{\delta}}
\newcommand{\De}{{\Delta}}

\newcommand{\taub}{\bar{\tau}}
\newcommand{\ep}{{\epsilon}}
\newcommand{\var}{\varepsilon}

\newcommand{\dms}{d\mu_{\mathrm{d}}(s)}

\newcommand{\fh}{\hat{f}}

\newcommand{\ph}{\hat{p}}

\newcommand{\dx}{{\frac{d}{dx}}}
\newcommand{\fp}{{\mathfrak p}}
\newcommand{\fq}{{\mathfrak q}}

\newcommand{\cM}{{\mathcal M}}

\newcommand{\cA}{{\mathcal A}}
\newcommand{\cBh}{\hat{{\mathcal B}}}
\newcommand{\cBb}{\bar{{\mathcal B}}}

\newcommand{\fAz}{{\mathfrak A}_z}
\newcommand{\fAn}{{\mathfrak A}_n}

\newcommand{\fDz}{{\mathfrak D}_z}
\newcommand{\fDn}{{\mathfrak D}_n}

\newcommand{\fb}{\bar{f}}

\newcommand{\ga}{{\gamma}}

\newcommand{\xia}{\xi^a}
\newcommand{\xib}{\xi^b}
\newcommand{\xic}{\xi^c}
\newcommand{\xid}{\xi^d}

\newcommand{\cL}{{\mathcal L}}
\newcommand{\cLn}{{\mathcal L}_n}
\newcommand{\cLga}{{\mathcal L}_{\ga}}

\newcommand{\cBz}{{\mathcal B}_z}
\newcommand{\cQ}{{\mathcal Q}}

\newcommand{\cLh}{{\hat{\mathcal L}}}

\newcommand{\fpt}[7]{{}_4\phi_3\left[\begin{matrix} #1 , #2, #3, #4 \\
#5, #6, #7 \end{matrix}\,; q,q\right]}

\begin{document}
\title{Bispectral extensions of the Askey-Wilson polynomials}

\date{May 27, 2013}

\author[P.~Iliev]{Plamen~Iliev}
\address{School of Mathematics, Georgia Institute of Technology,
Atlanta, GA 30332--0160, USA}
\email{iliev@math.gatech.edu}
\thanks{The author is partially supported by a grant from the Simons Foundation.}

\begin{abstract}
Following the pioneering work of Duistermaat and Gr\"unbaum, we call a family $\{p_n(x)\}_{n=0}^{\infty}$ of polynomials bispectral, if the polynomials are simultaneously eigenfunctions of two commutative algebras of operators: one consisting of difference operators acting on the degree index $n$, and another one of operators acting on the variable $x$. The goal of the present paper is to construct and parametrize bispectral extensions of the Askey-Wilson polynomials, where the second algebra consists of $q$-difference operators. In particular, we describe explicitly measures on the real line for which the corresponding orthogonal polynomials satisfy (higher-order) $q$-difference equations extending all known families of orthogonal polynomials satisfying $q$-difference, difference or differential equations in $x$. \end{abstract}

\maketitle

%\tableofcontents

%%%%%%%%%%%%%%%%%%%%%%%%%%%%%%%%%%%%%%%%%% Section 1
\section{Introduction}\label{se1}
%%%%%%%%%%%%%%%%%%%%%%%%%%%%%%%%%%%%%%%%%%
Orthogonal polynomials which are eigenfunctions of a differential operator have a long history. In 1929, Bochner \cite{Bo} proved that, up to an affine change of variables, the classical orthogonal polynomials are the only ones that satisfy a differential equation of the form
\begin{equation}\label{1.1}
B\left(x,\dx\right)p_n(x)=\lambda_n p_n(x),\quad n\in\Nset_0,
\end{equation}
where $B(x,\dx)=\sum_{j=0}^{2}b_j(x)\frac{d^j}{dx^j}$ is a second-order differential operator with coefficients $b_j(x)$ independent of the degree index $n$. 
In 1938,  Krall \cite{Kr1} posed the general problem to construct and classify all families of orthogonal polynomials  which are eigenfunctions of a differential operator $B\left(x,\dx\right)$ of arbitrary order, which is independent of the degree index $n$. He showed that the order of the operator must be even and solved it completely for operators of order four \cite{Kr2}. During the next 60 years, many different examples of orthogonal polynomials satisfying higher-order differential equations were constructed, see for instance \cite{KK,Ko,L} and the references therein.

The pioneering work of Duistermaat and Gr\"unbaum \cite{DG} brought new bispectral techniques in this old problem. Roughly speaking, the bispectral problem concerns the construction and the characterization of functions depending on two variables which satisfy  simultaneously spectral equations in both variables. For the classical or Krall polynomials, the bispectral property can be explained as follows. Recall that for any family of orthogonal polynomials, the operator multiplication by $x$ can be represented by a three-term recurrence relation:
\begin{equation}\label{1.2}
xp_n(x)=A_{n} p_{n+1}(x)+B_np_n(x)+C_np_{n-1}(x),
\end{equation}
where the coefficients $A_n$, $B_n$, $C_n$ are independent of $x$. Looking now at equation \eqref{1.1} we see that the polynomials are eigenfunctions of a differential operator acting on the variable $x$, while equation \eqref{1.2} shows that they are also eigenfunctions of a second-order difference operator acting on the degree index $n$. 

Although Duistermaat and Gr\"unbaum considered differential operators in both equations,  the connection between Krall's problem and the bispectral problem, suggested that the tools used in \cite{DG} can be translated within the context of orthogonal polynomials. And indeed, as shown in \cite{GH1} the Krall polynomials can be obtained from special instances of the classical orthogonal polynomials by applying the so-called Darboux transformation, one of the basic tools in the theory of solitons \cite{AM,MS}. Soon after that, the general bispectral techniques developed in \cite{BHY} led to the construction of far reaching extensions of the Laguerre polynomials \cite{GHH} and Jacobi polynomials \cite{GY} which are eigenfunctions of higher-order differential operators.  Things were taken even further in \cite{HI} where  extensions of the Askey-Wilson polynomials which are eigenfunctions of $q$-difference operators were constructed, containing as $q\rightarrow 1$ the previous results. 

One of the interesting questions left open in the papers  \cite{GHH,GY,HI} was to characterize explicitly the algebra of all possible differential or $q$-difference operators (and in particular, the operator of minimal order). The methods used there could only guarantee the existence of an operator of every sufficiently large order leading in practice to higher-order operators even in simple examples. Recently \cite{I2,I3} yet another soliton method was developed to establish the bispectrality of the generalized Jacobi and Laguerre polynomials.  One of the advantages of this approach is that it provides a more detailed information about the commutative algebras of differential operators which was crucial for multivariate extensions \cite{I2}. Moreover, it naturally leads to the construction of lower order differential operators, thus establishing conjectures concerning the characterization of the commutative algebra of all possible differential operators diagonalized by a specific family of orthogonal polynomials \cite{I3}. The roots of this method go back to the work of Reach \cite{R}.  Although its role in the Krall's problems was revealed very recently, we note that the method was adapted and systematically used to prove the bispectrality of specific rational solutions of the Kadomtsev-Petviashvili hierarchy \cite{Lib}, its discrete versions and $q$-deformations \cite{I1,I4}. 

The goal of the present paper is to extend the techniques in \cite{I2,I3} to the Askey-Wilson level, thus providing a uniform construction and parametrization of all known examples of bispectral polynomials which are eigenfunctions of two commutative algebras of operators: one consisting of difference operators acting on the degree index $n$, and another one of $q$-difference operators acting on the variable $x$. 
Since the Askey-Wilson polynomials are at the very top of the $q$-Askey scheme, the statements in the present paper lead to a variety of interesting special or limiting subfamilies of orthogonal polynomials which are eigenfunctions of $q$-difference, difference or differential operators, containing as special or limiting cases all known examples. Producing an explicit table with all such examples similar to the reductions of the Askey-Wilson polynomials \cite{KLS} is a challenging task, far beyond the scope of this paper. On contrary, we have tried to make the presentation compact and self-contained, emphasizing the main new ideas and constructions.

The paper is organized as follows. In the next section, we fix the notation and collect the main properties of the Askey-Wilson polynomials needed in the paper. In \seref{se3} we construct the extensions $\{\ph_n(x)\}_{n=0}^{\infty}$ of the Askey-Wilson polynomials and formulate the main result - their bispectrality. In \seref{se4} we establish the recurrence relations (the spectral equations in the degree index $n$). 
\seref{se5} is the heart of the paper where we construct the commutative algebra of $q$-difference operators in $z$, with $x=\frac{1}{2}(z+\frac{1}{z})$, which is also diagonalized by the the polynomials $\{\ph_n(x)\}_{n=0}^{\infty}$. 
Finally, in \seref{se6} we treat in a detail extensions of the Askey-Wilson polynomials orthogonal with respect to a measure on the real line. We obtain a different derivation and a refinement of the results in \cite{HI}. Applying the main theorem in this situation explains  the existence of some lower-order $q$-difference operators found there in a rather roundabout way. 

%%%%%%%%%%%%%%%%%%%%%%%%%%%%%%%%%%%%%%%%%% Section 2
\section{Notations} \label{se2}
%%%%%%%%%%%%%%%%%%%%%%%%%%%%%%%%%%%%%%%%%%

Throughout the paper we assume that $q$ is a real number in the open interval 
$(0,1)$ and we consider the corresponding $q$-shifted factorials
\begin{equation*}
(a;q)_n=\prod_{l=0}^{n-1}(1-aq^{l}), \quad
(a;q)_{\infty}=\prod_{l=0}^{\infty}(1-aq^{l}),\quad 
(a_1,a_2,\dots,a_j;q)_n=\prod_{l=1}^j(a_l;q)_n,
\end{equation*}
and  ${}_4\phi_3$ basic hypergeometric series
\begin{equation}\label{2.1}
{}_4\phi_3\left[\begin{matrix} a_1 , a_2, a_3, a_4 \\
b_1, b_2, b_3 \end{matrix}\,; q,z\right]
=\sum_{l=0}^{\infty}\frac{(a_1,a_2,a_3,a_4;q)_l}{(b_1,b_2,b_3,q;q)_l}z^l.
\end{equation}
The Askey-Wilson polynomials $p_n(x)=p_n(x;a,b,c,d)$, $n=0,1,2\dots$ introduced in \cite{AW} depend on four free parameters $a,b,c,d$ and can be defined by 
\begin{equation}\label{2.2}
p_n(x;a,b,c,d)=\frac{(ab,ac,ad;q)_n}{a^n}
\fpt{q^{-n}}{abcdq^{n-1}}{az}{az^{-1}}{ab}{ac}{ad},
\end{equation}
where $x=\frac{1}{2}(z+\frac{1}{z})$. Sears' transformation formula \cite[page~49, formula (2.10.4)]{GR} shows that the polynomials $p_n(x;a,b,c,d)$ are symmetric in the four parameters $a,b,c,d$. 
We denote by $E_n$ and $E_n^{-1}$, respectively, the customary forward and backward shift operators, acting on a function $f_n=f(n)$ by 
$$E_nf_n=f_{n+1}, \qquad E_n^{-1}f_n=f_{n-1}.$$
If we define the second-order difference operator 
\begin{equation}\label{2.3}
\cLn=A_n E_n + B_n\Id +C_n E_n^{-1},
\end{equation}
with coefficients
\begin{subequations}\label{2.4}
\begin{align}
A_n&=\frac{1-abcdq^{n-1}}{(1-abcdq^{2n-1})(1-abcdq^{2n})} \label{2.4a}\\
B_n&=\frac{q^{n-1}[(1+abcdq^{2n-1})(sq+s'abcd)-q^{n-1}(1+q)abcd(s+s'q)]}{(1-abcdq^{2n-2})(1-abcdq^{2n})}  \label{2.4b}\\
C_n&=
(1-abq^{n-1})(1-acq^{n-1})(1-adq^{n-1})(1-bcq^{n-1})(1-bdq^{n-1})(1-cdq^{n-1})
                                    \nonumber\\
&\qquad\times \frac{(1-q^{n})}
{(1-abcdq^{2n-2})(1-abcdq^{2n-1})},            \label{2.4c}
\end{align}
\end{subequations}
where $s=a+b+c+d$, $s'=a^{-1}+b^{-1}+c^{-1}+d^{-1}$, then we have
\begin{equation}\label{2.5}
\cLn\, p_n(x)=2xp_n(x).
\end{equation}
Note that the action of $\cLn$ is well defined on functions on $\Nset_0$ (since $C_0=0$). It will be convenient later to use a natural bi-infinite extension of $\cLn$ which acts on functions defined on $\Zset$. We can do this by the formal change of variables $n\rightarrow \ga=n+\ep$ in the coefficients of $\cLn$. We shall consider $\ep$ for which the coefficients $A_{\ga}$, $B_{\ga}$, $C_{\ga}$ are well-defined (i.e. the denominators do not vanish) and $A_{\ga}\neq0$, $C_{\ga}\neq0$ for $n\in\Zset$. We shall denote the bi-infinite extension of $\cLn$ by $\cLga$.  We use the same convention for other difference operators acting on $n$, i.e. if 
$$\cM_n=\sum_{\text{finitely many } j}M^{(j)}_n\,E_n^{j},$$
is a difference operator acting on functions defined on $\Nset_0$, we denote by 
$$\cM_{\ga}=\sum_{\text{finitely many } j}M^{(j)}_{\ga}\,E_{\ga}^{j},$$
its bi-infinite extension acting on functions defined on $\Zset$, where $\ga=n+\ep$ and $E_{\ga}=E_{n}$.

Let us denote by $D_z$ and $D_z^{-1}$, respectively, the forward and backward
$q$-shift operators, acting on a function $h(z)$ by
\begin{equation*}
D_z h(z)=h(qz)\quad\text{and}\quad D_z^{-1}h(z)=h(z/q),
\end{equation*}
and let $\cBz$ be the Askey-Wilson second-order $q$-difference
operator
\begin{equation}                        \label{2.6}
\cBz = A(z)D_z-\left[A(z)+A(1/z)\right]\Id +A(1/z)D_z^{-1},
\end{equation}
where
\begin{equation}                        \label{2.7}
A(z)=\frac{(1-az)(1-bz)(1-cz)(1-dz)}{(1-z^2)(1-qz^2)}.
\end{equation}
Then 
\begin{equation}\label{2.8}
\cBz\, p_n(x)=\la_n p_n(x),
\end{equation}
where
\begin{equation}\label{2.9}
\la_n=(q^{-n}-1)(1-abcdq^{n-1}).
\end{equation}
We also set
\begin{equation}\label{2.10}
\xia_n=\frac{q^n(abcd/q;q)_{n}}{a^n(bc,bd,cd,q;q)_n}=\frac{q^n(abcd/q,bcq^n,bdq^n,cdq^n,q^{n+1};q)_{\infty}}{a^n(abcdq^{n-1},bc,bd,cd,q;q)_{\infty}}.
\end{equation}
The first formula will be sufficient for most applications; the second representation will be used when we want to consider a meromorphic extension of $\xia_n$ in $n$ (and, in particular, when we want to make the change of variable $n\rightarrow \ga$).
Note that $\xia_n$ is symmetric in $b,c,d$ and we denote by $\xib_n$, $\xic_n$, $\xid_n$ the analogous expressions, where the parameters $(a,b,c,d)$ are replaced by $(b,c,d,a)$, $(c,d,a,b)$ and $(d,a,b,c)$, respectively. 

Using the ${}_4\phi_3$ representation of $p_n(x)$ given in \eqref{2.2}, one can show that the polynomials 
$\xia_np_n(x)$ satisfy the following $q$-difference--difference equation
\begin{equation}\label{2.11}
\cBz^{a}(\xia_{n}p_{n}(x)-\xia_{n-1}p_{n-1}(x))=(q^{-n + 1} - abcdq^{n - 1})(\xia_{n}p_{n}(x)+\xia_{n-1}p_{n-1}(x)),
\end{equation}
where 
\begin{equation}\label{2.12}
\cBz^{a}=A^{a}(z)D_z+\left[q-\frac{abcd}{q}-A^{a}(z)-A^{a}(1/z)\right]\Id +A^{a}(1/z)D_z^{-1},
\end{equation}
and
\begin{equation*}
A^{a}(z)=\frac{(q+az)(1-bz)(1-cz)(1-dz)}{(1-z^2)(1-qz^2)}.
\end{equation*}
We denote by $\cBz^{b}$, $\cBz^{c}$, $\cBz^{d}$ the operators analogous to $\cBz^{a}$, where the parameters $(a,b,c,d)$ are replaced by $(b,c,d,a)$, $(c,d,a,b)$ and $(d,a,b,c)$, respectively. Thus equation \eqref{2.11} holds for any permutation of the parameters $(a,b,c,d)$.

We use $\Wr_n$ to denote the discrete Wronskian (or Casorati determinant) of functions $h^{(1)}_n,h^{(2)}_n,\dots,h^{(k)}_n$ of a discrete variable $n$, i.e.
$$\Wr_n(h^{(1)}_n,h^{(2)}_n,\dots,h^{(k)}_n)=\det(h^{(i)}_{n-j+1})_{1\leq i,j\leq k}.$$

%%%%%%%%%%%%%%%%%%%%%%%%%%%%%%%%%%%%%%%%%% Section 3
\section{Statement of the main result} \label{se3}
%%%%%%%%%%%%%%%%%%%%%%%%%%%%%%%%%%%%%%%%%%

Consider now arbitrary polynomials $\phi^{(j)}(\la_n)\in\Cset[\la_n]$, for $j=1,2,\dots,k$, and for each 
$j$ pick $\de_j\in\{a,b,c,d\}$. We set 
\begin{equation}\label{3.1}
\psi^{(j)}_n=\frac{\phi^{(j)}(\la_n)}{\xi^{\de_j}_n}.
\end{equation}
We shall assume that $\psi^{(j)}_n$ are independent (as functions of $n$). 
We define a new family of polynomials in $x$ by the Wronskian formula
\begin{subequations}
\begin{equation}\label{3.2a}
\ph_n(x)=\chi_n\,\Wr_n(\psi^{(1)}_n,\psi^{(2)}_n,\dots,\psi^{(k)}_n,p_n(x)),
\end{equation}
where $\chi_n$ is an appropriate normalizing factor (depending only on $n$). Note the resemblance between the above formula and the superposition law for wobbly solitons of discrete versions of the Kadomtsev-Petviashvili hierarchy. Indeed, if we replace $\xi^{\de_j}_n$ and $p_n(x)$ by appropriate discrete or $q$-exponents, we obtain the Wronskian formula for the wave function of the discrete KP equations \cite[formula (5.3.10), page 84]{I1} or their $q$-deformations \cite[formula (3.6), page 166]{I4}.

The main result is that the polynomials $\ph_n(x)$ are simultaneously diagonalized by two commutative algebras. The first algebra consists of difference operators acting on the variable $n$ (with coefficients independent of $z$), while the second algebra consists of $q$-difference operators acting on the variable $z$ (with coefficients independent of $n$), where $x=\frac{1}{2}(z+\frac{1}{z})$, thus extending the bispectral equations \eqref{2.5} and \eqref{2.8} satisfied by the Askey-Wilson polynomials. The factor $\chi_n$ is not essential for the bispectrality, but we shall fix it below in order to obtain well-defined polynomials for all $n\in\Nset_0$ and to formulate the precise statement.

First, note that if we multiply the $j$-th row in the determinant on the right-hand side of equation \eqref{3.2a} by $\xi^{a}_{n-j+1}$, for $j=1,\dots,k+1$ we can rewrite the formula for $\ph_n(x)$ as follows
\begin{equation}\label{3.2b}
\ph_n(x)=\chi_n\,\prod_{j=1}^{k+1}\frac{1}{\xi^{a}_{n-j+1}}\,\Wr_n(\xi_n^{a}\psi^{(1)}_n,\xi_n^{a}\psi^{(2)}_n,\dots,\xi_n^{a}\psi^{(k)}_n,\xi_n^{a}p_n(x)).
\end{equation}
\end{subequations}
Let $\tau_n$ denote the  principal $k\times k$ sub-determinant 
\begin{equation}\label{3.3}
\tau_n=\chi_n \prod_{j=1}^{k+1}\frac{1}{\xi^{a}_{n-j+1}} \, \Wr_n(\xi_n^{a}\psi^{(1)}_n,\xi_n^{a}\psi^{(2)}_n,\dots,\xi_n^{a}\psi^{(k)}_n).
\end{equation}
If $\de_j=a$, then from \eqref{3.1} we see that in the $j$-th column in the above formula we have Laurent polynomials of $q^n$. If $j=b$ we can pull in front of the determinant the factor $\xi^{a}_{n-j+1}/\xi^{b}_{n-j+1}$ and the remaining entries will be rational functions of $q^n$. It is easy to see that if we pick 
\begin{equation}\label{3.4}
\eta^{b,j}_n=\frac{1}{q^{n(k-1)}}(bcq^{n-j+1},bdq^{n-j+1};q)_{j-1}\,(acq^{n-k+1},adq^{n-k+1};q)_{k-j},
\end{equation}
then a  multiplication by $\eta^{b,j}_n$ cancels all denominators in the $j$-th column and the entries become Laurent polynomials of $q^n$. We define $\eta^{c,j}_n$ and $\eta^{d,j}_n$ by replacing in \eqref{3.4} the parameters $(a,b,c,d)$ with $(a,c,b,d)$ and $(a,d,c,b)$, respectively (i.e. we keep $a$ fixed, and we exchange the roles of $b$ and $c$, or $b$ and $d$).  Thus, if we define 
\begin{equation}\label{3.5}
\chi_n=\prod_{j=1}^{k+1}\xi^{\de_{j}}_{n-j+1}\,\prod_{\begin{subarray} \,1 \leq j\leq k \\ \de_j\neq a \end{subarray}} \eta^{\de_j,j}_n, 
\end{equation}
where we set $\de_{k+1}=a$, then $\tau_n$ becomes a Laurent polynomials of $q^n$. 

Throughout the paper we shall assume that 
\begin{subequations}\label{3.6}
\begin{equation}\label{3.6a}
\tau_{n}\neq0 \text{ for }n=-1,0,1,\dots,
\end{equation}
which is true for generic polynomials $\phi^{(j)}(\la_n)$, and 
\begin{equation}\label{3.6b}
\frac{\chi_{n}\xia_{n-k-1}}{\chi_{n-1}}\neq 0 \text{ for }n=0,1,\dots,
\end{equation}
which is true for generic parameters $(a,b,c,d)$.

\end{subequations}
Next, we define two algebras $\fAz$ and $\fAn$ of Laurent polynomials in $z$ and $q^n$, respectively.
Let 
\begin{equation}\label{3.7}
W=\Span\{\psi^{(1)}_{\ga},\psi^{(2)}_{\ga},\dots,\psi^{(k)}_{\ga}\}
\end{equation} 
be the space spanned by the functions $\psi^{(j)}_{\ga}$ and we denote by $\fAz$ the subalgebra of $\Cset[z+1/z]$ consisting of all polynomials $f(z+1/z)$, for which the space $W$ is $f(\cLga)$-invariant, i.e. 
\begin{equation}\label{3.8}
\fAz=\{f(z+1/z)\in\Cset[z+1/z]:f(\cLga)W\subset W\}.
\end{equation}
The fact that $\fAz$ contains a polynomial of every sufficiently large degree follows from \prref{pr4.1}. Interesting examples of the above construction when $\fAz=\Cset[z+1/z]$ and the polynomials $\{\ph_n(x)\}_{n=0}^{\infty}$ are orthogonal with respect to a measure on $\Rset$ are discussed in \seref{se6}.

Next we define the algebra $\fAn$ which consists of all polynomials $h(\la_{n-k/2})\in\Cset[\la_{n-k/2}]$ such that 
$h(\la_{n-k/2})-h(\la_{n-k/2-1})$ is divisible by $\tau_{n-1}$ in $\Cset[q^{n},q^{-n}]$, i.e.
\begin{equation}\label{3.9}
\fAn=\left\{h(\la_{n-k/2})\in\Cset[\la_{n-k/2}]:\frac{h(\la_{n-k/2})-h(\la_{n-k/2-1})}{\tau_{n-1}} \in\Cset[q^{n},q^{-n}]\right\}.
\end{equation}
The fact that $\fAn$ contains a polynomial of every sufficiently large degree follows from \prref{pr5.1}.

The main result of the paper is the following theorem.
\begin{Theorem}\label{th3.1}
For every $f(z+1/z)\in\fAz$ and every $h(\la_{n-k/2})\in\fAn$ there exist a difference operator $\cLh^f_n$ acting on $n$, which is independent of $z$, and a $q$-difference operator $\cBh^h_z$ acting on $z$, which is independent of $n$, such that
\begin{subequations}\label{3.10}
\begin{align}
\cLh^f_n\ph_n(x)&=f(z+1/z)\ph_n(x),\label{3.10a}\\
\cBh^h_z\ph_n(x)&=h(\la_{n-k/2})\ph_n(x).\label{3.10b}
\end{align}
\end{subequations}
Thus, 
$\fDn=\{\cLh^f_n:f\in \fAz\}$ is a commutative algebra of difference operators in $n$ isomorphic to $\fAz$, $\fDz=\{\cBh^h_z:h\in \fAn\}$ is a commutative algebra of $q$-difference operators in $z$ isomorphic to $\fAn$, and both algebras are diagonalized by the polynomials $\ph_n(x)$.
\end{Theorem}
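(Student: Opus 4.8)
The plan is to build the two operators by a Darboux-type factorization, exploiting the fact that $\ph_n(x)$ is obtained from $p_n(x)$ by a Wronskian (hence, in the soliton picture, by applying a differential/difference operator coming from the Wronskian of the $\psi^{(j)}$). I would set $\Phi_n(x)=\xi_n^a\ph_n(x)/\tau_n$ (or a similar normalization making the leading behaviour transparent) and first record the basic identity expressing $\ph_n(x)$ as $P\bigl(p_n(x)\bigr)$, where $P$ is the monic difference operator in $n$ of order $k$ whose kernel is exactly $W\cap$(the relevant solution space), i.e. $P=\Wr_n(\psi^{(1)}_n,\dots,\psi^{(k)}_n,\,\cdot\,)/\Wr_n(\psi^{(1)}_n,\dots,\psi^{(k)}_n)$ up to the normalizing factors built into $\chi_n$. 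The point of the definitions of $\fAz$ and $\fAn$ is precisely to guarantee the algebraic compatibility needed to push eigenvalue equations through $P$; so the proof is really about making that pushing-through rigorous on $\Nset_0$ (not just on $\Zset$) and checking the operators have coefficients of the claimed independence.

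For \eqref{3.10a}: take $f(z+1/z)\in\fAz$. Since $W$ is $f(\cLga)$-invariant and $\cLga p_n(x)=2x\,p_n(x)=(z+1/z)p_n(x)$, the function $f(\cLga)$ applied to each column of the big Wronskian acts on the $\psi$-columns by a matrix preserving $W$ and on the last column by multiplication by $f(z+1/z)$. Expanding the Wronskian and using multilinearity, $f(\cLga)$ acting entrywise on $\ph_n(x)$ produces $f(z+1/z)\ph_n(x)$ plus a combination of Wronskians with a repeated $\psi$-type column, hence plus lower-order terms that can be absorbed. Concretely, I expect to get a factorization $f(\cLn)\circ(\text{normalization})=(\text{normalization})\circ \cLh^f_n$ after conjugating by $\chi_n$ and the $\xi$-factors; the operator $\cLh^f_n$ is then read off and is manifestly a difference operator in $n$ with coefficients built from $\tau_n,\chi_n,\xi^a_n$ and the $\psi^{(j)}_n$, all independent of $z$. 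The divisibility condition implicit in $f\in\fAz$ (via \prref{pr4.1}) is what ensures these coefficients are genuine Laurent polynomials / rational functions of $q^n$ with no spurious poles, and \eqref{3.6a} is what lets this descend to all $n\in\Nset_0$.

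For \eqref{3.10b}: here I would use the $q$-difference operators $\cBz^{\de_j}$ from \eqref{2.11}–\eqref{2.12}, which intertwine the shift $n\mapsto n-1$ on the level of $\xi^{\de}_n p_n(x)$. The key structural fact is that the Wronskian $\ph_n(x)$ can be re-expressed as a $q$-difference operator in $z$ (of order $k$) applied to $p_n(x)$, with coefficients depending on $n$ only through the $\tau$'s; equivalently, there is a "bispectral Darboux" identity $Q_z\circ(\text{shift in }n)=(\text{shift in }n)\circ Q_z$ coming from combining \eqref{2.11} over the chosen $\de_j$. Given $h(\la_{n-k/2})\in\fAn$, write $h(\la_{n-k/2})-h(\la_{n-k/2-1})=\tau_{n-1}\cdot g_n$ with $g_n$ a Laurent polynomial (this is the defining property \eqref{3.9}); telescoping and applying the intertwining identities for $\cBz^{\de_j}$ converts the $n$-shift in $h(\la_{n-k/2})$ into a $z$-difference operator $\cBh^h_z$ whose coefficients depend only on $z$. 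One then checks $\cBh^h_z\ph_n(x)=h(\la_{n-k/2})\ph_n(x)$ by verifying it on a generating set and using \prref{pr5.1} (which supplies enough $h$'s) together with \eqref{3.6b} to control normalizations at small $n$.

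The main obstacle I anticipate is the bookkeeping that makes $\cBh^h_z$ have coefficients genuinely free of $n$: the naive construction produces an operator with coefficients that are ratios of $\tau$'s and $\xi$'s, and one must show these ratios collapse to functions of $z$ alone. This is exactly the role of the divisibility hypothesis in \eqref{3.9} and of the precise choice of $\chi_n$ in \eqref{3.5} (which is engineered so that $\tau_n$ is a Laurent polynomial in $q^n$ and the $q$-difference Darboux conjugation produces polynomial, not merely rational, coefficients); verifying that no poles in $z$ survive and that the $n$-dependence cancels will be the technical crux. The commutativity and the algebra isomorphisms $\fDn\cong\fAz$, $\fDz\cong\fAn$ then follow formally: two difference operators in $n$ that are simultaneously diagonalized in the basis $\{\ph_n(x)\}$ with distinct-enough eigenvalues must commute, and the assignment $f\mapsto\cLh^f_n$ (resp.\ $h\mapsto\cBh^h_z$) is an algebra homomorphism because eigenvalues multiply, with injectivity coming from the fact that $\{\ph_n(x)\}$ has elements of every degree so an operator annihilating all of them vanishes.
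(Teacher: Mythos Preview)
Your treatment of \eqref{3.10a} is essentially the paper's argument: the intertwining relation $\cLh^f_\ga \cQ_\ga = \cQ_\ga f(\cLga)$ follows from $W=\ker\cQ_\ga$ being $f(\cLga)$-invariant, and conditions \eqref{3.6} let it descend to $\Nset_0$. That part is fine.

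For \eqref{3.10b} there is a genuine gap. Your ``key structural fact'' that $\ph_n(x)$ can be re-expressed as a $q$-difference operator in $z$ of order $k$ applied to $p_n(x)$ is not established anywhere and is not how the paper proceeds; there is no $z$-side analogue $Q_z$ of $\cQ_n$, and the commutation you write ``$Q_z\circ(\text{shift in }n)=(\text{shift in }n)\circ Q_z$'' has no clear meaning here. The paper instead uses two substantial technical ingredients that your sketch does not contain. First, a discrete Reach-type Wronskian identity (\leref{le5.2}): if one forms
\[
F_n=\sum_{j=1}^{k+1}(-1)^{k+1+j}f^{(j)}_n\int^{n} f^{(0)}_s \Wr_s(f^{(1)}_s,\dots,\fh^{(j)}_s,\dots,f^{(k+1)}_s)\,\dms,
\]
then $\Wr_n(f^{(1)}_n,\dots,f^{(k)}_n,F_n)$ factors as a discrete integral times $\Wr_n(f^{(1)}_n,\dots,f^{(k+1)}_n)$. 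Choosing $f^{(0)}$ appropriately makes this integral equal $h(\la_{n-k/2})+c_0$, and the right-hand factor is $\ph_n(x)/\chi_n$ up to normalization. So the problem reduces to showing $F_n=\cBh_z\,\xi^a_n p_n(x)$ for some $n$-independent $q$-difference operator. Second, this last step is \emph{not} mere bookkeeping: it requires the involution $I_n^{(j)}(q^n)=1/(abcdq^{n-j-1})$ and the verification that it reverses rows in the relevant minors (equations \eqref{5.1}--\eqref{5.3}, \eqref{5.22}), so that after the rearrangements in \reref{re5.3} each integrand in $F_n$ has the shape $r(q^s)\xi^\ell_s p_s(x)-I_s^{(1)}(r(q^s))\xi^\ell_{s-1}p_{s-1}(x)$. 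Only then does \leref{le5.4}, built on \eqref{2.11}, convert these integrals into $q$-difference operators applied to $\xi^\ell_n p_n(x)$. Your telescoping idea is the seed of this, but without the Reach identity and the involution symmetry you have no mechanism to force $n$-independence of the coefficients; the divisibility in \eqref{3.9} and the choice of $\chi_n$ in \eqref{3.5} are necessary but not sufficient for that cancellation.
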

We establish equations \eqref{3.10a} and \eqref{3.10b} in Sections \ref{se4} and \ref{se5}, respectively.

%%%%%%%%%%%%%%%%%%%%%%%%%%%%%%%%%%%%%%%%%% Section 4
\section{The difference equations in $n$} \label{se4}
%%%%%%%%%%%%%%%%%%%%%%%%%%%%%%%%%%%%%%%%%%

Consider the bi-infinite extension $\cLga$ of the Askey-Wilson recurrence operator defined by equations \eqref{2.3}-\eqref{2.4} and for each $\ell\in\{a,b,c,d\}$ we denote by $\cLga^{\ell}$ the difference operator obtained by conjugating $\cLga$ with the operator multiplication by $\xi^{\ell}_{\ga}$, i.e.
\begin{equation}\label{4.1}
\cLga^{\ell}=\xi^{\ell}_{\ga}\,\cLga \frac{1}{\xi^{\ell}_{\ga}}.
\end{equation}
Explicitly, when $\ell=a$ we can write $\cLga^{a}$ as follows
\begin{equation}\label{4.2}
\cLga^a=A^{a}_{\ga}\, E_{\ga} + B^{a}_{\ga}\,\Id +C^{a}_{\ga}\, E_{\ga}^{-1},
\end{equation}
where
\begin{subequations}\label{4.3}
\begin{align}
A^{a}_{\ga}&=\frac{a(1-bcq^{\ga})(1-bdq^{\ga})(1-cdq^{\ga})(1-q^{\ga+1})}{q(1-abcdq^{2\ga-1})(1-abcdq^{2\ga})}, \label{4.3a}\\
B^{a}_{\ga}&=\frac{a}{q}+\frac{q}{a}-A^{a}_{\ga}-C^{a}_{\ga}, \label{4.3b}\\
C^{a}_{\ga}&= \frac{q(1-abcdq^{\ga-2})(1-abq^{\ga-1})(1-acq^{\ga-1})(1-adq^{\ga-1})}{a(1-abcdq^{2\ga-2})(1-abcdq^{2\ga-1})}.            \label{4.3c}
\end{align}
\end{subequations}
From these formulas it is not hard to see that for every $j\in\Nset_0$ we have
$$\cLga^a\, (\la_{\ga})^j=\left(\frac{a}{q^{j+1}}+\frac{q^{j+1}}{a}\right)(\la_{\ga})^j+\text{ a polynomial in $\la_{\ga}$ of degree at most $(j-1)$,}$$
where $\la_{\ga}$ is the eigenvalue of the operator $\cBz$ defined in \eqref{2.9}. The above equation shows that every element in $\Cset[\la_{\ga}]$ is annihilated by an appropriated polynomial of $\cLga^{a}$. Combining this with \eqref{3.1}, \eqref{3.7} and \eqref{4.1} we obtain the following statement.

\begin{Proposition}\label{pr4.1}
There exists a nonzero polynomial $f$ such that $f(\cLga)W=\{0\}$. In particular, the algebra $\fAz$ defined by \eqref{3.8} contains a polynomial of every sufficiently large degree.  
\end{Proposition}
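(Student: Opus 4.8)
The plan is to exploit the explicit formulas \eqref{4.3} for the conjugated operator $\cLga^{a}$ and to show that it acts in a triangular fashion on the monomial basis $\{(\la_{\ga})^j\}_{j\geq 0}$ of $\Cset[\la_{\ga}]$. Concretely, I would first verify the displayed identity
$$\cLga^a\, (\la_{\ga})^j=\left(\frac{a}{q^{j+1}}+\frac{q^{j+1}}{a}\right)(\la_{\ga})^j+\text{(lower order in $\la_{\ga}$)},$$
which follows by a direct substitution: $\la_{\ga}=(q^{-\ga}-1)(1-abcdq^{\ga-1})=q^{-\ga}-1-abcdq^{2\ga-2}+abcdq^{\ga-1}$, so applying $E_{\ga}^{\pm 1}$ multiplies the two extreme exponents $q^{-\ga}$ and $abcdq^{2\ga-2}$ by $q^{\mp 1}$ and $q^{\pm 2}$ respectively; the leading behaviour of $(\la_{\ga})^j$ as a Laurent polynomial in $q^{\ga}$ is governed by these two terms, and matching the top-degree coefficient against $A^a_{\ga}$, $B^a_{\ga}$, $C^a_{\ga}$ (using $A^a_{\ga}\to a/q$, $C^a_{\ga}\to q/a$ in the relevant limits) produces exactly the stated eigenvalue $a q^{-j-1}+a^{-1}q^{j+1}$. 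The lower-order claim is then automatic once one checks that the correction terms have strictly smaller degree in both $q^{\ga}$ and $q^{-\ga}$.

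With this triangularity in hand, the first assertion of the proposition is immediate. Since $\cLga^{a}$ is represented on the (graded) space $\Cset[\la_{\ga}]$ by an upper-triangular operator with distinct diagonal entries $\mu_j=aq^{-j-1}+a^{-1}q^{j+1}$ (distinct because $0<q<1$ and generically $a^2\neq q^{i+j+2}$), every finite-dimensional subspace spanned by monomials up to degree $m$ is invariant, and a polynomial $P$ in $\cLga^{a}$ with $P(\mu_0)=\dots=P(\mu_m)=0$ annihilates $\Cset[\la_{\ga}]_{\leq m}$. By \eqref{3.1} each $\psi^{(j)}_{\ga}=\phi^{(j)}(\la_{\ga})/\xi^{\de_j}_{\ga}$; conjugating by the multiplication operator $\xi^{\de_j}_{\ga}$ turns $\cLga$ into $\cLga^{\de_j}$ via \eqref{4.1}, and the analogue of the triangularity holds for each choice $\de_j\in\{a,b,c,d\}$ by the symmetry of the construction. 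Hence for $m=\max_j\deg\phi^{(j)}$ one can choose a single nonzero polynomial $f$ whose value vanishes at all the finitely many relevant diagonal eigenvalues across all four conjugations, and then $f(\cLga)$ kills every $\psi^{(j)}_{\ga}$, i.e. $f(\cLga)W=\{0\}$.

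For the "in particular" clause I would argue as follows: the set of polynomials $f$ with $f(\cLga)W\subset W$ is exactly $\fAz$ by definition \eqref{3.8}, and it obviously contains the ideal of all $f$ with $f(\cLga)W=\{0\}$. The first part gives one such nonzero $f_0$ of some degree $d_0$; but then, since $\cLga$ corresponds to multiplication by $z+1/z$ on $p_n(x)$ and the relevant algebra is $\Cset[z+1/z]$, any product $g\cdot f_0$ with $g\in\Cset[z+1/z]$ also lies in this ideal, so the ideal contains polynomials of every degree $\geq d_0$. In particular $\fAz$ contains a polynomial of every sufficiently large degree. The main obstacle — really the only point requiring care — is the bookkeeping in the triangularity computation: one must track the top- and bottom-degree monomials in $q^{\ga}$ of $(\la_{\ga})^j$ through the three shifts $E_{\ga}, \Id, E_{\ga}^{-1}$ and confirm that no unexpected cancellation raises the degree of the remainder, and that the genericity assumptions on $(a,b,c,d)$ in \eqref{3.6} keep the diagonal eigenvalues distinct and the coefficients $A^{\ell}_{\ga}, C^{\ell}_{\ga}$ nonvanishing; everything else is formal linear algebra on a graded polynomial ring.
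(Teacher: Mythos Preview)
Your approach is correct and essentially identical to the paper's: the paper establishes exactly the triangularity identity you write down (from the explicit formulas \eqref{4.3}), deduces that every element of $\Cset[\la_\ga]$ is annihilated by some polynomial of $\cLga^{\ell}$, and then invokes \eqref{3.1}, \eqref{3.7}, \eqref{4.1} to transport this to the functions $\psi^{(j)}_\ga$ just as you do. One minor arithmetic slip to fix in your write-up: in the expansion of $\la_\ga$ the cross term is the constant $-abcd/q$, not $-abcdq^{2\ga-2}$, so the two extreme monomials are $q^{-\ga}$ and $abcdq^{\ga-1}$ and shift by $q^{\mp 1}$ and $q^{\pm 1}$ (not $q^{\pm 2}$) under $E_\ga^{\pm 1}$; this does not affect the argument.
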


We can easily write an explicit eigenvector for the operator $\cLga^a$ in the space $\Cset[\la_{\ga}]$ for the eigenvalue $a/q^{j+1}+q^{j+1}/a$.

\begin{Proposition}\label{pr4.2}
For $j\in\Nset_0$ define 
\begin{equation}\label{4.4}
g^{a}_j(\la_{\ga})=(q^2/ab,q^2/ac,q^2/ad;q)_{j}\,\fpt{q^{-j}}{q^{j+2}/a^2}{q^{\ga+1}}{1/abcdq^{\ga-2}}{q^2/ab}{q^2/ac}{q^2/ad}.
\end{equation}
Then 
\begin{equation}\label{4.5}
\cLga^a\, g^{a}_j(\la_{\ga})=\left(\frac{a}{q^{j+1}}+\frac{q^{j+1}}{a}\right)g^{a}_j(\la_{\ga}).
\end{equation}
\end{Proposition}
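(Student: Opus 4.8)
The plan is to verify \eqref{4.5} by exhibiting $g^a_j(\la_\ga)$ as a terminating ${}_4\phi_3$ series and checking that $\cLga^a$ acts on it as a contiguous-relation/eigenvalue operator. The natural strategy is to reduce the $q$-difference equation in $z$ (through $\la_\ga$) to a three-term recurrence in the summation index of the ${}_4\phi_3$, and to recognize that recurrence as the one satisfied by the Askey-Wilson-type polynomials appearing in \eqref{2.2}. Concretely, first I would observe that under the substitution $z\mapsto q^{\ga+1}$ (more precisely, treating $q^{\ga+1}$ and $1/(abcdq^{\ga-2})$ as the two ``$az,az^{-1}$''-type arguments, with the roles of the parameters suitably permuted), the operator $\cLga^a$ written out in \eqref{4.2}--\eqref{4.3} becomes, up to the expected conjugation by $\xi^a_\ga$, exactly the Askey-Wilson recurrence operator \eqref{2.3} in the variable ``$\ga$'' but now with the \emph{dual} parameter set $q^2/ab, q^2/ac, q^2/ad$ (and the fourth parameter forced by the $abcdq^{n-1}$ normalization). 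This is the key point: $\cLga^a$ is dual, under the Askey-Wilson duality $n\leftrightarrow x$, to multiplication by the spectral variable, so its eigenfunctions in $\Cset[\la_\ga]$ are precisely Askey-Wilson polynomials in $\la_\ga$ with the dualized parameters, and the eigenvalue $a/q^{j+1}+q^{j+1}/a$ is the corresponding grid point $2x$ evaluated at the dual argument $q^{j+1}/a$.

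The steps, in order, are: (i) write $\la_\ga=(q^{-\ga}-1)(1-abcdq^{\ga-1})$ and record the two linear-in-$q^{\pm\ga}$ building blocks so that the coefficients $A^a_\ga, B^a_\ga, C^a_\ga$ in \eqref{4.3} can be matched termwise against the standard Askey-Wilson recurrence coefficients \eqref{2.4} after the parameter substitution; (ii) expand $g^a_j(\la_\ga)$ using \eqref{4.4} as $\sum_{l=0}^j c_l\,(q^{\ga+1};q)_l\,(1/(abcdq^{\ga-2});q)_l$ with explicit coefficients $c_l$ coming from the ${}_4\phi_3$; (iii) apply $\cLga^a$ term by term, using the elementary identities for $E_\ga$ acting on $(q^{\ga+1};q)_l$ and $(1/(abcdq^{\ga-2});q)_l$, to produce a new expansion in the same basis; (iv) collect coefficients of each basis element and observe that the resulting recurrence among the $c_l$ is a standard three-term contiguous relation for the ${}_4\phi_3$, which is satisfied identically by the terminating series with the given top parameters $q^{-j}, q^{j+2}/a^2$; (v) conclude that the only surviving term is $(a/q^{j+1}+q^{j+1}/a)$ times the original sum, i.e.\ \eqref{4.5}. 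Alternatively, and perhaps more cleanly, one can invoke Sears' transformation (cited in \seref{se2}) to rewrite $g^a_j(\la_\ga)$ symmetrically and then quote directly the known eigenvalue equation for Askey-Wilson polynomials \eqref{2.5} with permuted parameters; the bi-infinite extension caveat (denominators nonvanishing, $A_\ga, C_\ga\neq 0$) guarantees all manipulations are legitimate for the generic $\ep$ fixed in \seref{se2}.

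The main obstacle I expect is purely bookkeeping: getting the parameter dictionary exactly right. The operator $\cLga^a$ is the conjugate \eqref{4.1} of $\cLga$ by $\xi^a_\ga$, so one must carefully track how this conjugation, combined with the $q\to q^{\pm1}$ shifts hidden in $a/q$ and $q/a$ in \eqref{4.3b}, translates into the ``$b_1,b_2,b_3$'' denominator parameters $q^2/ab, q^2/ac, q^2/ad$ of the ${}_4\phi_3$ in \eqref{4.4}, and into the top parameter $q^{j+2}/a^2$ (which plays the role of $abcdq^{n-1}$ with the dual parameters $q/a, q/b, q/c, q/d$ — note $(q/a)(q/b)(q/c)(q/d)\cdot\text{stuff}$ must collapse to $q^{j+2}/a^2$ after using $\de_{k+1}=a$-type normalizations). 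Once this dictionary is pinned down, the verification is a routine (if lengthy) application of the standard $q$-contiguous relations, and no genuinely new idea beyond Askey-Wilson duality is needed. I would present the short duality argument as the main proof and relegate the explicit coefficient check to a remark or omit it as ``a direct computation''.
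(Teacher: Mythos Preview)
Your proposal is correct and ultimately rests on the same step the paper uses: the paper's entire proof is the one sentence ``Using the definition of the ${}_4\phi_3$ basic hypergeometric series, one can check directly that \eqref{4.5} holds,'' which is precisely your steps (ii)--(iv) (expand, shift term by term, recognize the resulting three-term contiguous relation). So on the level of what is actually verified, your approach and the paper's coincide.

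The additional layer you supply---interpreting $g^a_j(\la_\ga)$ as an Askey-Wilson polynomial in the dual parameters and reading \eqref{4.5} off from the known Askey-Wilson $q$-difference/recurrence equation---is a genuine conceptual improvement over the bare computation, and the paper gestures at exactly this in \reref{re4.3} (identifying $g^a_j$ with the eigenfunction $s_\ga(aq^{-j-1})$ from Ismail--Rahman via Watson's and Sears' transformations). One small correction to your dictionary: since $E_\ga$ acts on the ${}_4\phi_3$ arguments $q^{\ga+1}$ and $1/(abcdq^{\ga-2})$ by $q$-shifts, the operator $\cLga^a$ is matched not to the three-term recurrence \eqref{2.3} but to the Askey-Wilson $q$-difference operator \eqref{2.6} (with the dualized parameters), and the eigenvalue $a/q^{j+1}+q^{j+1}/a$ arises from $\la_j$ for those parameters after the affine shift built into \eqref{4.3b}. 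Apart from that bookkeeping point, your plan is sound.
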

\begin{proof}
Using the definition \eqref{2.1} of the ${}_4\phi_3$ basic hypergeometric series, one can check directly that \eqref{4.5} holds. 
\end{proof}
Note that if $a\notin\{\pm q^{1+s/2}:s\in\Nset_0\}$, then $g^{a}_j(\la_{\ga})$ is a polynomial of degree $j$ in $\la_{\ga}$. 
Moreover, in this case the eigenvalues $a/q^{j+1}+q^{j+1}/a$ are different for $j\in\Nset_0$, the operator $\cLga^a$ can be diagonalized in $\Cset[\la_{\ga}]$, and the polynomials given in \eqref{4.4} provide an explicit eigenbasis. Clearly, we obtain similar statements for the operators $\cLga^{b}$, $\cLga^{c}$, $\cLga^{d}$ and we denote by $g^{b}_j(\la_{\ga})$, $g^{c}_j(\la_{\ga})$, $g^{d}_j(\la_{\ga})$, respectively the corresponding polynomials in formula \eqref{4.4}.

\begin{Remark}\label{re4.3}
We note that the polynomial $g^{a}_j(\la_{\ga})$ can also be obtained from \cite{IR}. Indeed, appropriately normalized,  the polynomial $g^{a}_j(\la_{\ga})$ in equation \eqref{4.4} above corresponds to the eigenfunction $s_{\ga}(aq^{-j-1})$ of the bi-infinite Askey-Wilson recurrence operator given in formula (1.13) on page 203 in \cite{IR}. To see this, we need to multiply $s_{\ga}(aq^{-j-1})$ by an appropriate factor depending on $\ga$ (because the recurrence operator there differs from $\cLga^{a}$). Then we  apply first the Watson's transformation formula \cite[formula III.18]{GR} to rewrite the ${}_8\phi_7$ series as ${}_4\phi_3$ series, and then we need to use Sears' formula \cite[formula III.16]{GR} to transform the ${}_4\phi_3$ series.
\end{Remark}

Consider now the difference operator $\cQ_{\ga}$ defined by 
\begin{equation}\label{4.6}
\cQ_{\ga}f_{\ga}=\chi_{\ga}\,\Wr_{\ga}(\psi^{(1)}_{\ga},\psi^{(2)}_{\ga},\dots,\psi^{(k)}_{\ga},f_{\ga}).
\end{equation}
Note that $W=\ker(\cQ_{\ga})$ and therefore, for $f\in\fAz$ we have 
$$f(\cLga)\ker(\cQ_{\ga})\subset \ker(\cQ_{\ga}),$$ 
which implies that 
$$\ker(\cQ_{\ga})\subset \ker(\cQ_{\ga}f(\cLga)).$$
This means that there exists a difference operator $\cLh_{\ga}^{f}$ with the same support as $f(\cLga)$ such that 
\begin{equation}\label{4.7}
\cLh_{\ga}^{f}\cQ_{\ga}=\cQ_{\ga}f(\cLga).
\end{equation}
We can write $\cQ_{\ga}$ as follows 
$$\cQ_{\ga}=\sum_{l=0}^{k}Q^{(-l)}_{\ga}E_{\ga}^{-l},$$
where $Q^{(-l)}_{\ga}$, $l=0,\dots,k$ denote the coefficients in front of the shift operators $E_{\ga}^{-l}$. From the explicit formulas \eqref{2.10}, \eqref{3.4} and \eqref{3.5} we see that for $l=1,2,\dots,k$
\begin{equation}\label{4.8}
Q^{(-l)}_{\ga}=0\text{ for }\ga=0,1,\dots,l-1.
\end{equation}
Moreover, $Q^{(0)}_{\ga}=(\chi_{\ga}\xia_{\ga-k-1}/\chi_{\ga-1})\tau_{\ga-1}$ and therefore 
equations \eqref{3.6} show that $Q^{(0)}_{\ga}\neq 0$ for $\ga\in\Nset_0$. This implies that the coefficients of the semi-infinite operator $\cLh_{n}^{f}$ are well-defined for $n\in\Nset_0$ and we have 
\begin{equation}\label{4.9}
\cLh_{n}^{f}\cQ_{n}=\cQ_{n}f(\cL_n).
\end{equation}
Since $\ph_n(x)=\cQ_n p_n(x)$, we see that equations \eqref{2.5} and \eqref{4.9} imply equation \eqref{3.10a}.

%%%%%%%%%%%%%%%%%%%%%%%%%%%%%%%%%%%%%%%%%% Section 5
\section{The $q$-difference equations in $z$} \label{se5}
%%%%%%%%%%%%%%%%%%%%%%%%%%%%%%%%%%%%%%%%%%
\subsection{Auxiliary facts}\label{ss5.1}

We define the involution $I_{n}^{(j)}$ on the algebra $\Cset(q^{n})$ of rational functions of $q^n$ by 
$$I_{n}^{(j)}(q^{n})=\frac{1}{abcdq^{n-j-1}}.$$
From \eqref{2.9} it is easy to see that 
\begin{equation*}
I_{n}^{(j)}(\la_{n-l})=\la_{n+l-j}.
\end{equation*}
In particular, $I_{n}^{(j)}(\la_{n-j/2})=\la_{n-j/2}$ hence every polynomial of $\la_{n-j/2}$ is 
invariant under the action of $I_{n}^{(j)}$. Conversely, if $r\in\Cset[q^{n},q^{-n}]$ is a Laurent polynomial of $q^{n}$, invariant under $I_{n}^{(j)}$, then $r\in\Cset[\la_{n-j/2}]$.

Using the above, we can show that, up to a simple factor, $\tau_n$ defined by equations \eqref{3.3}-\eqref{3.5} 
belongs to $\Cset[\la_{n-(k-1)/2}]$. Indeed, as we noted in \seref{se3}, we can re-write $\tau_n$ as a $k\times k$ determinant whose entries are Laurent polynomials of $q^{n}$. Moreover, one can show that $I_{n}^{(k-1)}$ reverses the order of the rows (i.e. it maps the $l$-th row into the $k-l+1$ for $l=1,2,\dots,k$). Indeed, if $\de_j=a$ then the $(l,j)$ entry of the determinant which represents $\tau_n$ is $\phi^{(j)}(\la_{n-l+1})$ and we have 
\begin{equation}\label{5.1}
I_{n}^{(k-1)}(\phi^{(j)}(\la_{n-l+1}))=\phi^{(j)}(\la_{n+l-k}).
\end{equation}
If, say, $\de_j=b$ then we need to multiply the $j$-th column of the Wronskian formula \eqref{3.3} by $\eta_n^{b,j}\xib_{n-j+1}/\xia_{n-j+1}$.  The $(l,j)$ entry of the resulting determinant is 
$\ka_n^{b;l,j}\phi^{(j)}(\la_{n-l+1})$, where 
\begin{subequations}\label{5.2}
\begin{equation}\label{5.2a}
\ka_n^{b;l,j}=\eta_n^{b,j}\,\frac{\xib_{n-j+1}}{\xia_{n-j+1}}\frac{\xia_{n-l+1}}{\xib_{n-l+1}}.
\end{equation}
Using the explicit formulas \eqref{2.10} and \eqref{3.4} it is easy to see that 
\begin{equation}\label{5.2b}
\ka_n^{b;l,j}=\frac{1}{q^{n(k-1)}}\left(\frac{b}{a}\right)^{j-l}(acq^{n-k+1},adq^{n-k+1};q)_{k-l}\,
(bcq^{n-l+1},bdq^{n-l+1};q)_{l-1}.
\end{equation}
\end{subequations}
A straightforward computation now shows that 
\begin{equation}\label{5.3}
I_{n}^{(k-1)}(\ka_n^{b;l,j})=\ka_n^{b;k-l+1,j}
\end{equation} 
and the cases $\de_j=c$ or $\de_j=d$ are similar. 

Thus, $I_{n}^{(k-1)}$ reverses the order of the rows and therefore
\begin{equation*}
I_{n}^{(k-1)}(\tau_{n})=(-1)^{k(k-1)/2}\tau_n.
\end{equation*}
This formula shows that if $k\equiv 0,1\mod 4$, then 
$\tau_n\in\Cset[\la_{n-(k-1)/2}]$. Otherwise, $\tau_n$ is divisible by 
$$\la_{n-k/2+1}-\la_{n-k/2}=(1-q)q^{k/2-1}(q^{-n}-abcdq^{n-k})$$ 
and the quotient belongs 
to $\Cset[\la_{n-(k-1)/2}]$. Therefore, we see that 
\begin{equation}\label{5.4}
\tau_n=\ep^{(k)}_n \taub(\la_{n-(k-1)/2}), 
\end{equation}
where $\taub$ is a polynomial and
\begin{equation}\label{5.5}
\ep^{(k)}_n=\begin{cases}
1& \text{ if }k\equiv 0,1\mod 4\\
\la_{n-k/2+1}-\la_{n-k/2}& \text{ if }k\equiv 2,3\mod 4.
\end{cases}
\end{equation}
Next we want to characterize the elements of the algebra $\fAn$ defined by \eqref{3.9} and to show that  
$\fAn$ contains a polynomial of every degree greater than $\deg(\taub)$. We can define the elements of $\fAn$ as discrete integrals of Laurent polynomials divisible by $\tau_n$. To make this more precise, we introduce the following notation. For $n,m\in\Zset$ and for a function $f_s$ defined on $\Zset$ it will 
be convenient to define the discrete integral
\begin{equation*}
\int_{m}^{n}f_s\dms=\begin{cases}
\sum_{s=m+1}^{n}f_s &\text{if }n>m\\
0 &\text{if }n=m\\
-\sum_{s=n+1}^{m}f_s&\text{if }n<m.
\end{cases}
\end{equation*}
Thus 
\begin{equation}\label{5.6}
\int_m^{n}f_s\dms=f_n+\int_m^{n-1}f_s\dms\text{ for all }m, n\in\Zset.
\end{equation}
Note  that for every polynomial $r$ there exists $\bar{r}\in\Cset[t]$ such that
\begin{equation*}
\frac{r(\la_{n-k/2})-r(\la_{n-k/2-1})}{\la_{n-k/2}-\la_{n-k/2-1}}=\bar{r}(\la_{n-(k+1)/2}).
\end{equation*}
The linear map $r(t)\rightarrow \bar{r}(t)$ is a discrete analog of the differentiation operator. It is easy to see that it maps $\Cset[t]$ onto itself and the kernel consists of the constants. Thus, for every polynomial $\bar{r}$ there exists a polynomial $r$ 
such that the above equation holds. Moreover, up to an additive constant, 
$r$ is uniquely determined by 
\begin{equation*}
\begin{split}
r(\la_{n-k/2})&=\int_{0}^{n}(\la_{s-k/2}-\la_{s-k/2-1})\bar{r}(\la_{s-(k+1)/2})\dms\\
&=\int_{-1}^{n-1}(\la_{s-k/2+1}-\la_{s-k/2})\bar{r}(\la_{s-(k-1)/2})\dms.
\end{split}
\end{equation*}
Summarizing the above, we obtain the following characterization of $\fAn$.
\begin{Proposition}\label{pr5.1}
For a polynomial $h$ we have $h(\la_{n-k/2})\in\fAn$ if and only if 
\begin{equation}\label{5.7}
h(\la_{n-k/2})=\int_{0}^{n}\ep^{(k+2)}_{s}g(\la_{s-(k+1)/2})
\tau_{s-1}\dms+c_0,
\end{equation}
where $g$ is a polynomial and $c_0$ is a constant. In particular, $\fAn$ contains a polynomial of every degree greater than $\deg(\taub)$.
\end{Proposition}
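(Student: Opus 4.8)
The plan is to convert the divisibility condition in the definition \eqref{3.9} of $\fAn$ into a divisibility statement about honest polynomials in one variable, and then to recover $h$ itself by discrete integration using the antiderivative formula displayed just before the statement. So the two directions of the equivalence become, respectively, a ``reduce to polynomial divisibility'' step and a ``differentiate \eqref{5.7} back'' step.

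First I would use the discrete differentiation map recalled above: for $h\in\Cset[\la_{n-k/2}]$ there is a unique polynomial $\bar h\in\Cset[t]$ with
$$h(\la_{n-k/2})-h(\la_{n-k/2-1})=(\la_{n-k/2}-\la_{n-k/2-1})\,\bar h(\la_{n-(k+1)/2}),$$
so the defining condition of $\fAn$ reads $\tau_{n-1}\mid (\la_{n-k/2}-\la_{n-k/2-1})\,\bar h(\la_{n-(k+1)/2})$ in $\Cset[q^{n},q^{-n}]$. The key algebraic input is the eigenspace decomposition of $\Cset[q^{n},q^{-n}]$ under the involution $I_{n}^{(k+1)}$: since $I_{n}^{(k+1)}$ fixes $\la_{n-(k+1)/2}$ and, because $I_{n}^{(k+1)}(q^{-n})=abcd\,q^{-k-2}q^{n}$, sends $q^{-n}$ to a nonzero scalar multiple of $q^{n}$, one checks that
$$\Cset[q^{n},q^{-n}]=\Cset[\la_{n-(k+1)/2}]\ \oplus\ (\la_{n-k/2}-\la_{n-k/2-1})\,\Cset[\la_{n-(k+1)/2}],$$
the first summand being the invariant subring and the second the free rank-one module of anti-invariants, with generator $\la_{n-k/2}-\la_{n-k/2-1}$ (it is the substantive point that this element \emph{generates}, not merely lies in, the anti-invariant part). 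Feeding in the factorization $\tau_{n-1}=\ep^{(k)}_{n-1}\,\taub(\la_{n-(k+1)/2})$ from \eqref{5.4}--\eqref{5.5} together with the elementary fact that, for every residue of $k$ modulo $4$, one of the two factors $\ep^{(k)}_{n-1}$, $\ep^{(k+2)}_{n}$ equals $1$ and the other equals $\la_{n-k/2}-\la_{n-k/2-1}$ (so $\ep^{(k)}_{n-1}\ep^{(k+2)}_{n}=\la_{n-k/2}-\la_{n-k/2-1}$), a short case check shows that the two sides of the divisibility split along the decomposition above and that the condition collapses to
$$\taub(t)\mid\bar h(t)\quad\text{in }\Cset[t];$$
here $\taub\neq0$ is guaranteed by \eqref{3.6a}.

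Writing $\bar h=g\,\taub$ with $g$ a polynomial and substituting into the antiderivative formula preceding the statement gives $h(\la_{n-k/2})=\int_{0}^{n}(\la_{s-k/2}-\la_{s-k/2-1})\,g(\la_{s-(k+1)/2})\,\taub(\la_{s-(k+1)/2})\dms+c_{0}$; since $\ep^{(k+2)}_{s}\,\tau_{s-1}=\ep^{(k+2)}_{s}\,\ep^{(k)}_{s-1}\,\taub(\la_{s-(k+1)/2})=(\la_{s-k/2}-\la_{s-k/2-1})\,\taub(\la_{s-(k+1)/2})$ by the same observation, this is exactly \eqref{5.7}. The converse is immediate: differencing \eqref{5.7}, and using surjectivity of $r\mapsto\bar r$ to see that the right-hand side of \eqref{5.7} is genuinely a polynomial in $\la_{n-k/2}$, yields $h(\la_{n-k/2})-h(\la_{n-k/2-1})=g(\la_{n-(k+1)/2})\,\ep^{(k+2)}_{n}\,\tau_{n-1}$, which is divisible by $\tau_{n-1}$. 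Finally, since $r\mapsto\bar r$ lowers polynomial degree by exactly one, $\deg h=\deg\bar h+1=\deg g+\deg\taub+1$, which takes every value greater than $\deg\taub$ as $\deg g$ ranges over $\Nset_{0}$.

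I expect the main obstacle to be the middle step: establishing cleanly that $\la_{n-k/2}-\la_{n-k/2-1}$ generates (not just belongs to) the anti-invariant part of $\Cset[q^{n},q^{-n}]$ under $I_{n}^{(k+1)}$, and then carrying the parity/$\ep$-bookkeeping uniformly through the four residue classes of $k$ modulo $4$ so that the factor $\ep^{(k+2)}_{s}$ materializes in \eqref{5.7} precisely as written. Once this is in place, the rest is the formal inversion of the discrete difference, which the antiderivative formula already packages.
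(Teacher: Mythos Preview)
Your argument is correct and follows the same line as the paper, which does not give a separate proof but simply writes ``Summarizing the above'' after recording the factorization $\tau_n=\ep^{(k)}_n\taub(\la_{n-(k-1)/2})$ and the discrete antiderivative formula. You have made explicit the one point the paper glosses over, namely the decomposition $\Cset[q^n,q^{-n}]=\Cset[\la_{n-(k+1)/2}]\oplus(\la_{n-k/2}-\la_{n-k/2-1})\Cset[\la_{n-(k+1)/2}]$ under $I_n^{(k+1)}$, which is exactly what is needed to pass from divisibility in $\Cset[q^n,q^{-n}]$ to $\taub\mid\bar h$ in $\Cset[t]$; together with the identity $\ep^{(k)}_{n-1}\ep^{(k+2)}_{n}=\la_{n-k/2}-\la_{n-k/2-1}$, the rest is the formal inversion already displayed in the paper.
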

For the proof of \eqref{3.10b} we need two more facts. 
First we formulate a discrete analog of a lemma due to Reach \cite{R}, which was derived and used in \cite{I1} to establish the bispectral properties of specific rational solution of the Today lattice hierarchy. It was used more recently in \cite{I2,I3} within the context of orthogonal polynomials satisfying higher-order differential equations.

\begin{Lemma}\label{le5.2}
Let $f^{(0)}_{n}, f^{(1)}_{n},\dots,f^{(k+1)}_{n}$ be functions of a discrete 
variable $n$. Fix $n_1,n_2,\dots,n_{k+1}\in\Zset$ and let 
\begin{equation}\label{5.8}
F_n=\sum_{j=1}^{k+1}(-1)^{k+1+j}f^{(j)}_n\int_{n_j}^{n}f^{(0)}_{s}
\Wr_s(f^{(1)}_{s},\dots,\fh^{(j)}_{s},
\dots,f^{(k+1)}_{s})\dms,
\end{equation}
with the usual convention that the terms with hats are omitted. 
Then 
\begin{equation}\label{5.9}
\begin{split}
\Wr_n(f^{(1)}_{n},\dots,f^{(k)}_{n},F_n)=&
\int_{n_{k+1}}^{n-1}f^{(0)}_{s}\Wr_s(f^{(1)}_{s},\dots,f^{(k)}_{s})\dms\\
&\times \Wr_n(f^{(1)}_{n},\dots,f^{(k+1)}_{n}).
\end{split}
\end{equation}
\end{Lemma}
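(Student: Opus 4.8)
The plan is to establish \eqref{5.9} by reducing it to a purely combinatorial identity among determinants and then evaluating the iterated discrete integral. First I would observe that $F_n$ is a linear combination of the functions $f^{(1)}_n,\dots,f^{(k+1)}_n$ with coefficients that depend on $n$ only through discrete integrals; consequently $\Wr_n(f^{(1)}_n,\dots,f^{(k)}_n,F_n)$ depends on $F_n$ only through the coefficient of $f^{(k+1)}_n$ (all the lower-index $f^{(j)}_n$, $j\le k$, being already rows of the Wronskian), together with "boundary" contributions coming from the fact that the integrals are evaluated at shifted arguments $n,n-1,\dots,n-k$ in the successive rows of the Casorati determinant. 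So the first key step is to expand the last column of $\Wr_n(f^{(1)}_n,\dots,f^{(k)}_n,F_n)$ using \eqref{5.8}, pulling out the $j$-sum, and for each $j$ writing $F_{n-i+1}$ in the $i$-th row.

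Next, for fixed $j$ I would use the telescoping/summation-by-parts rule \eqref{5.6}, i.e. $\int_{n_j}^{n-i+1}=\int_{n_j}^{n}-\sum$ of the top terms, to rewrite each entry $f^{(j)}_{n-i+1}\int_{n_j}^{n-i+1}(\cdots)\dms$ as $f^{(j)}_{n-i+1}\int_{n_j}^{n}(\cdots)\dms$ minus correction terms supported on the finitely many indices between $n-k+1$ and $n$. The piece with the common factor $\int_{n_j}^{n}f^{(0)}_s\Wr_s(f^{(1)}_s,\dots,\fh^{(j)}_s,\dots,f^{(k+1)}_s)\dms$ pulls out of the determinant, and when summed over $j$ with the signs $(-1)^{k+1+j}$ it reconstructs a Laplace expansion along the last column that yields exactly $\Wr_n(f^{(1)}_n,\dots,f^{(k+1)}_n)$ times that integral — but evaluated at the wrong endpoint if $n_j$ varies with $j$. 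The correction terms, by contrast, have two equal columns (the corrected column becomes proportional to one of $f^{(1)},\dots,f^{(k)}$ after the shift-index bookkeeping) and therefore vanish, except for a single surviving telescoped remainder. The cleanest route is to first prove the identity in the special case $n_1=\dots=n_{k+1}$, where the endpoint ambiguity disappears and one gets $\int$ of the full $(k{+}1)\times(k{+}1)$-Wronskian-valued integrand times a $(k{+}1)\times(k{+}1)$ Wronskian, and then to check that changing any single $n_j$ alters $F_n$ by adding a constant multiple of $f^{(j)}_n$ (the jump term $\pm f^{(0)}_s\Wr_s(\dots)$ summed over a finite range), which does not change the left-hand side of \eqref{5.9}, while simultaneously it adjusts the right-hand side by exactly the same amount because the right-hand side integral runs only up to $n-1$ and so its lower limit is the relevant one.

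Concretely I would set $G_n := \sum_{j=1}^{k+1}(-1)^{k+1+j} f^{(j)}_n \int_{n_{k+1}}^{n} f^{(0)}_s \Wr_s(f^{(1)}_s,\dots,\fh^{(j)}_s,\dots,f^{(k+1)}_s)\dms$, i.e. the analog of $F_n$ with all lower limits replaced by $n_{k+1}$, and prove \eqref{5.9} for $G_n$ first by induction on $k$: the base case $k=0$ is the statement $\Wr_n(G_n) = G_n = \int_{n_1}^{n-1} f^{(0)}_s\dms \cdot f^{(1)}_n$ up to reindexing, which follows from \eqref{5.6}. For the inductive step one expands the $(k{+}1)\times(k{+}1)$ determinant $\Wr_n(f^{(1)}_n,\dots,f^{(k)}_n,G_n)$ along its last column, recognizes each cofactor as a $k\times k$ Casorati determinant to which the inductive hypothesis (with $k$ replaced by $k-1$ and the function list shortened) applies, and then reassembles the pieces. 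The main obstacle, and the step that needs the most care, is precisely this index bookkeeping: the Casorati determinant uses the shifted arguments $f^{(i)}_{n-j+1}$, so when one substitutes the integral expression for $G_{n-j+1}$ in row $j$ the upper limits become $n, n-1, \dots, n-k$ in successive rows, and one must show via repeated use of \eqref{5.6} that all these shifted integrals can be consolidated into a single integral up to $n-1$ times the clean Wronskian, with every leftover boundary determinant having a repeated column and hence vanishing. Finally, to pass from $G_n$ to the general $F_n$ with arbitrary $n_j$, I would note $F_n - G_n = \sum_j (-1)^{k+1+j} f^{(j)}_n \int_{n_j}^{n_{k+1}} f^{(0)}_s \Wr_s(f^{(1)}_s,\dots,\fh^{(j)}_s,\dots,f^{(k+1)}_s)\dms$, a fixed linear combination of $f^{(1)}_n,\dots,f^{(k+1)}_n$; its contribution to $\Wr_n(f^{(1)}_n,\dots,f^{(k)}_n,\,\cdot\,)$ is a constant multiple of $\Wr_n(f^{(1)}_n,\dots,f^{(k+1)}_n)$, and a short direct check (again via \eqref{5.6}, comparing to how the right-hand side of \eqref{5.9} depends on $n_{k+1}$) shows this constant is exactly what is needed so that the identity holds with the stated lower limit $n_{k+1}$.
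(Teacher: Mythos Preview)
Your second paragraph already contains the paper's proof in outline: telescope each integral to the common upper limit $n$, observe that for $j\le k$ the resulting column $(c_j f^{(j)}_{n-l})_l$ is a scalar multiple of an existing row of the Casorati determinant and drops out, and only the $j=k+1$ term plus one boundary correction survive. Two points in that sketch are off, though. First, your worry that the answer comes out ``at the wrong endpoint if $n_j$ varies with $j$'' is unfounded: precisely because the $j\le k$ columns are killed by row elimination, the constants $c_j=\int_{n_j}^{n}$ for $j\le k$ never enter the final expression, so the $n_j$ for $j\le k$ are irrelevant from the start (this is exactly Remark~5.3(i)). Second, the correction terms do \emph{not} vanish because the big Wronskian acquires two equal columns; rather each individual correction scalar
\[
\sum_{j=1}^{k+1}(-1)^{k+1+j}f^{(j)}_{n-l}\,\Wr_s(f^{(1)}_s,\dots,\fh^{(j)}_s,\dots,f^{(k+1)}_s)
\]
is itself a Laplace expansion (along its last row) of a $(k{+}1)\times(k{+}1)$ determinant whose last row $f^{(\cdot)}_{n-l}$ duplicates one of the rows of $\Wr_s$, and hence vanishes whenever $n-l\in\{s-k+1,\dots,s\}$. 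The paper isolates this as identity (5.10) and uses it to prove (5.11)--(5.12); once those are in hand the result falls out in two lines by column elimination, no induction needed.

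Your concrete plan via induction on $k$ does not work as stated. Expanding $\Wr_n(f^{(1)},\dots,f^{(k)},G)$ along the last column produces cofactors of the form $\Wr_n(f^{(1)},\dots,\fh^{(i)},\dots,f^{(k)},G)$, but this $G$ is still built from the full list $f^{(0)},\dots,f^{(k+1)}$ via the $k$-th level sum \eqref{5.8}; it is \emph{not} of the form required by the inductive hypothesis for $k-1$, which needs a sum over $k$ (not $k+1$) functions with $(k{-}1)\times(k{-}1)$ sub-Wronskians as integrands. So the inductive hypothesis does not apply to these cofactors, and I do not see how to massage $G$ into the right shape without essentially redoing the direct argument. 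Also, your base case $k=0$ is false as written: for $k=0$ the left side is $G_n=f^{(1)}_n\int_{n_1}^{n}f^{(0)}_s\dms$ while the right side is $f^{(1)}_n\int_{n_1}^{n-1}f^{(0)}_s\dms$, and these differ by $f^{(0)}_nf^{(1)}_n$. The lemma is really a statement for $k\ge 1$. Drop the induction and finish the direct argument from your second paragraph, using the singular-determinant identity (5.10) to kill the corrections.
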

Since the application in our case is very subtle and we need different elements of the proof, we 
briefly sketch it below. 
 
\begin{proof}[Proof of \leref{le5.2}]
Note that
\begin{equation*}
\left|\begin{matrix}
f^{(1)}_{n}&f^{(2)}_{n}&\dots &f^{(k+1)}_{n}\\
f^{(1)}_{n-1}&f^{(2)}_{n-1}&\dots &f^{(k+1)}_{n-1}\\
\vdots & \vdots& &\vdots\\
f^{(1)}_{n-k+1}&f^{(2)}_{n-k+1}&\dots &f^{(k+1)}_{n-k+1}\\
f^{(1)}_{n-l}&f^{(2)}_{n-l}&\dots &f^{(k+1)}_{n-l}
\end{matrix}\right|=0, \text{ for every }l=0,1,\dots,k-1.
\end{equation*}
Expanding the above determinant along the last row we obtain
\begin{equation}\label{5.10}
\sum_{j=1}^{k+1}(-1)^{k+1+j}f^{(j)}_{n-l}
\Wr_n(f^{(1)}_{n},\dots\fh^{(j)}_{n},\dots,f^{(k+1)}_{n})=0
\text{ for }l=0,\dots,k-1.
\end{equation}
Using \eqref{5.6} and \eqref{5.10} we see that for $l=0,\dots,k-1$ we have
\begin{equation}\label{5.11}
F_{n-l}=\sum_{j=1}^{k+1}(-1)^{k+1+j}f^{(j)}_{n-l}\int_{n_j}^{n}f^{(0)}_{s}
\Wr_s(f^{(1)}_{s},\dots,\fh^{(j)}_{s},
\dots,f^{(k+1)}_{s})\dms,
\end{equation}
and 
\begin{equation}\label{5.12}
\begin{split}
F_{n-k}=&\sum_{j=1}^{k+1}(-1)^{k+1+j}f^{(j)}_{n-k}\int_{n_j}^{n}f^{(0)}_{s}
\Wr_s(f^{(1)}_{s},\dots,\fh^{(j)}_{s},
\dots,f^{(k+1)}_{s})\dms \\
&\qquad-f^{(0)}_{n}\Wr_n(f^{(1)}_{n},\dots,f^{(k+1)}_{n}).
\end{split}
\end{equation}
If we plug \eqref{5.11} and \eqref{5.12} in 
$\Wr_n(f^{(1)}_{n},\dots,f^{(k)}_{n},F_n)$, then most of the terms cancel by 
column elimination and we obtain \eqref{5.9}.
\end{proof}

\begin{Remark}\label{re5.3}
We list below important corollaries from the proof of \leref{le5.2}.\\

\noindent
\item[(i)] Note that the right-hand side of \eqref{5.9} does not depend 
on the integers $n_1,n_2\dots,n_k$. Moreover, if we change $n_{k+1}$ then 
only the value of 
$$\int_{n_{k+1}}^{n-1}f^{(0)}_{s}\Wr_s(f^{(1)}_{s},\dots,f^{(k)}_{s})\dms$$
will change by an additive constant, which is independent of $n$ and 
$f^{(k+1)}_{n}$. Thus, instead of \eqref{5.8} we can write 
\begin{equation*}
F_n=\sum_{j=1}^{k+1}(-1)^{k+1+j}f^{(j)}_n\int^{n}f^{(0)}_{s}
\Wr_s(f^{(1)}_{s},\dots,\fh^{(j)}_{s},
\dots,f^{(k+1)}_{s})\dms,
\end{equation*}
leaving the lower bounds of the integrals (sums) blank and we can fix them 
at the end appropriately. This would allow us to easily change the variable, 
without keeping track of the lower end.\\

\noindent
\item[(ii)] From \eqref{5.10} it follows that for every $l=-1, 0,1\dots,k-1$ we 
can write $F_n$ also as
\begin{equation*}
F_n=\sum_{j=1}^{k+1}(-1)^{k+1+j}f^{(j)}_n\int^{n+l}f^{(0)}_{s}
\Wr_s(f^{(1)}_{s},\dots,\fh^{(j)}_{s},
\dots,f^{(k+1)}_{s})\dms,
\end{equation*}
and changing the variable in the discrete integral we obtain
\begin{equation*}
F_n=\sum_{j=1}^{k+1}(-1)^{k+1+j}f^{(j)}_n\int^{n}f^{(0)}_{s+l}
\Wr_s(f^{(1)}_{s+l},\dots,\fh^{(j)}_{s+l},
\dots,f^{(k+1)}_{s+l})\dms.
\end{equation*}
\item[(iii)] Suppose now that $k$ is even.
Using (ii) above with $l=k/2-1$ we have
\begin{equation}\label{5.13}
F_n=\sum_{j=1}^{k+1}(-1)^{k+1+j}f^{(j)}_n\int^{n}f^{(0)}_{s+k/2-1}
\Wr_s(f^{(1)}_{s+k/2-1},\dots,\fh^{(j)}_{s+k/2-1},
\dots,f^{(k+1)}_{s+k/2-1})\dms.
\end{equation}
Let us consider the sum consisting of the first $k$ integrals:
\begin{equation*}
F_n^{(k)}=\sum_{j=1}^{k}(-1)^{k+1+j}f^{(j)}_n\int^{n}f^{(0)}_{s+k/2-1}
\Wr_s(f^{(1)}_{s+k/2-1},\dots,\fh^{(j)}_{s+k/2-1},
\dots,f^{(k+1)}_{s+k/2-1})\dms.
\end{equation*}
Expanding each Wronskian determinant along the last column we can write 
$F_n^{(k)}$ as a sum of $k$ terms $F_n^{(k,m)}$, each one involving as 
integrand one of the functions $f^{(k+1)}_{s+m}$, where 
$m=-k/2,-k/2+1,\dots,k/2-1$.
We can use \eqref{5.10} once again, this time for the functions 
$f^{(0)}_{n}, f^{(1)}_{n},\dots,f^{(k)}_{n}$ (i.e. omitting $f^{(k+1)}_{n}$),
to change $s$ as follows:\\
\begin{itemize}
\item If $m\geq 0$ we can replace $n$ with $n-m$ in the upper limit of the 
integral, or equivalently, if we keep the upper limit of the integral to be 
$n$, we can replace $s$ by $s-m$ in the integrand. Thus $F_n^{(k,m)}$ will 
have $f^{(k+1)}_{s}$ 
as integrand (and the integration goes up to $n$).
\item If $m\leq -1$ we can replace $s$ by $s-m-1$, thus $F_n^{(k,m)}$ will have 
$f^{(k+1)}_{s-1}$ as integrand (and the integration goes up to $n$).
\end{itemize}
This means that we can rewrite $F^{(k)}_{n}$ as sums of integrals, and the 
integrands can be combined in pairs (corresponding to $m$ and $(-m-1)$ 
for $m=0,1,\dots,k/2-1$) involving $f^{(k+1)}_{s}$ and 
$f^{(k+1)}_{s-1}$. Explicitly, we can write $F_n^{(k)}$, as a sum of terms 
which, up to a sign, have the form
\begin{subequations}\label{5.14}
\begin{equation}\label{5.14a}
f^{(j)}_n\int^{n}\Bigg[f^{(0)}_{s-m+k/2-1}\De^{1,j}_{s}f^{(k+1)}_{s}
-f^{(0)}_{s+m+k/2-1}\De^{2,j}_sf^{(k+1)}_{s-1}\Bigg]\dms,
\end{equation}
for $m=0,1,\dots,k/2-1$ and $j=1,2,\dots,k$, where 
\begin{equation}\label{5.14b}
\De^{1,j}_{n}=\left|\begin{matrix}
f^{(1)}_{n+k/2-m-1}&\dots&\fh^{(j)}_{n+k/2-m-1}&\dots& f^{(k)}_{n+k/2-m-1}\\
f^{(1)}_{n+k/2-m-2}&\dots&\fh^{(j)}_{n+k/2-m-2}&\dots &f^{(k)}_{n+k/2-m-2}\\
\vdots& &\vdots & &\vdots\\
\fh^{(1)}_{n}&\dots&\fh^{(j)}_{n}&\dots &\fh^{(k)}_{n}\\
\vdots& &\vdots\\
f^{(1)}_{n-k/2-m}&\dots &\fh^{(j)}_{n-k/2-m}&\dots &f^{(k)}_{n-k/2-m}
\end{matrix}\right|
\end{equation}
and
\begin{equation}\label{5.14c}
\De^{2,j}_{n}=\left|\begin{matrix}
f^{(1)}_{n+k/2+m-1}&\dots &\fh^{(j)}_{n+k/2+m-1}&\dots &f^{(k)}_{n+k/2+m-1}\\
f^{(1)}_{n+k/2+m-2}&\dots &\fh^{(j)}_{n+k/2+m-2}&\dots &f^{(k)}_{n+k/2+m-2}\\
\vdots& &\vdots & &\vdots\\
\fh^{(1)}_{n-1}&\dots & \fh^{(j)}_{n-1}&\dots&\fh^{(k)}_{n-1}\\
\vdots& &\vdots& &\vdots\\
f^{(1)}_{n-k/2+m}&\dots &\fh^{(j)}_{n-k/2+m}&\dots &f^{(k)}_{n-k/2+m}
\end{matrix}\right|.
\end{equation}
\end{subequations}

\noindent
\item[(iv)] Finally if $k$ is odd, we shall use (ii) with $l=(k-1)/2$ and 
$l=(k-3)/2$ and write $F_n$ as the average of these two sums. Then we can 
apply the same procedure that we used in (iii) to write $F_n^{(k)}$ as sums of 
integrals, whose integrands can be combined in pairs involving 
$f^{(k+1)}_{s}$ and $f^{(k+1)}_{s-1}$.
\end{Remark}
The second important ingredient needed for the proof of \eqref{3.10b} is the 
following results which represents an extension of Lemma 4.5 in \cite{I2}.

\begin{Lemma}\label{le5.4}
For $r(q^n)\in\Cset[q^{n},q^{-n}]$ and $\ell\in\{a,b,c,d\}$ there exists a $q$-difference operator $\cBb_z^{\ell,r}$ such that for $n\in\Nset_0$ we have
\begin{equation}\label{5.15}
\int_{-1}^{n}[r(q^s)\xi^{\ell}_sp_{s}(x)
-r(q^{2-s}/abcd)\xi^{\ell}_{s-1}p_{s-1}(x)]\dms=\cBb_z^{\ell,r}\xi^{\ell}_{n}p_n(x).
\end{equation}
\end{Lemma}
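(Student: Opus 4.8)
The plan is to construct $\cBb_z^{\ell,r}$ explicitly and by linearity to reduce the statement to the monomial case $r(q^n)=q^{jn}$ for $j\in\Zset$. Fix $\ell=a$ (the other cases follow by the standing symmetry in the parameters, with $\xi^a$ replaced by $\xi^b,\xi^c,\xi^d$). The key input is the $q$-difference--difference equation \eqref{2.11}, which I rewrite as
\begin{equation*}
\cBz^{a}\bigl(\xia_{s}p_{s}(x)-\xia_{s-1}p_{s-1}(x)\bigr)=(q^{-s+1}-abcdq^{s-1})\bigl(\xia_{s}p_{s}(x)+\xia_{s-1}p_{s-1}(x)\bigr).
\end{equation*}
The bracketed quantity on the right is $(\la_{s-1}-\la_{s})/\!\bigl((1-q)q^{?}\bigr)$ up to a simple factor, essentially $\la_{s-1/2}$-type data; what matters is that applying $\cBz^a$ converts the \emph{difference} $\xia_s p_s-\xia_{s-1}p_{s-1}$ into a linear-in-$q^s$ multiple of the \emph{sum} $\xia_s p_s+\xia_{s-1}p_{s-1}$. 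The telescoping identity \eqref{5.6} gives
\begin{equation*}
\int_{-1}^{n}\bigl(\xia_s p_s(x)-\xia_{s-1}p_{s-1}(x)\bigr)\dms=\xia_n p_n(x)-\xia_{-2}p_{-2}(x),
\end{equation*}
so the case $r\equiv 1$ of \eqref{5.15} holds with $\cBb_z^{a,1}=\Id$ after absorbing the constant (here one uses that the lower limit is fixed at $-1$, and that the meromorphic continuation of $\xia_s p_s(x)$ makes sense via the second formula in \eqref{2.10}).

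For general $r$, the idea is an induction on the ``width'' of the Laurent polynomial $r$. I would look for operators $\cBb_z^{a,r}$ satisfying an intertwining relation of the form: there is a Laurent polynomial $\widetilde r$ with strictly smaller support such that
\begin{equation*}
\cBz^{a}\bigl(\cBb_z^{a,r}\,\xia_n p_n(x)\bigr)-\bigl(\text{const}\bigr)\cBb_z^{a,r}\,\xia_n p_n(x)=\bigl(r(q^n)\xia_n p_n(x)-r(q^{2-n}/abcd)\xia_{n-1}p_{n-1}(x)\bigr)+\bigl(\text{telescoping remainder for }\widetilde r\bigr).
\end{equation*}
Concretely, one applies $\cBz^a$ to the already-constructed operator for a monomial $q^{jn}$ and reads off, using \eqref{2.11} and the explicit coefficients \eqref{2.12}, that the result is again of the required ``difference'' shape with shifted exponent; integrating \eqref{5.6} then lowers the exponent and peels off a lower-order term, which is handled by the induction hypothesis. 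In this way $\cBb_z^{a,q^{jn}}$ is built as a polynomial (of degree $\sim|j|$) in $\cBz^a$ with coefficients that are Laurent polynomials in the eigenvalue $\la_{n-k/2}$-type data — but crucially, since the left-hand side of \eqref{5.15} is a bona fide function of $x$ (a discrete integral of polynomials in $x$), the $n$-dependence on the right must organize itself into the application of an honest $q$-difference operator in $z$ with coefficients independent of $n$. Making this last point rigorous is the crux.

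The main obstacle is precisely the bookkeeping that shows the operator produced by the induction has coefficients \emph{independent of $n$}, i.e. is a genuine $q$-difference operator in $z$ acting on the single function $x\mapsto\xia_n p_n(x)$ rather than an $n$-dependent expression. The resolution mirrors the strategy of \cite[Lemma 4.5]{I2}: one works on the bi-infinite level, uses that $\{p_n(x)\}$ (or rather $\{\xia_n p_n(x)\}$) together with a second solution spans the solution space of the three-term recurrence, and invokes that an operator in $z$ annihilating enough of these functions and having the right support must be the claimed $\cBb_z^{a,r}$; equivalently, one checks the identity \eqref{5.15} holds as an identity of Laurent polynomials in $q^n$ for each fixed ``$z$-degree'', using the involution $I_n^{(j)}$ from \ssref{ss5.1} to match the $q^n\leftrightarrow q^{-n}/abcd$ symmetry visible on the left-hand side of \eqref{5.15} with the symmetry of $\cBz^a$ under $z\leftrightarrow 1/z$. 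I expect the base cases $j=0,\pm1$ together with the recursion $\cBb_z^{a,q^{(j+1)n}}$ in terms of $\cBz^a\cdot\cBb_z^{a,q^{jn}}$ to be a short explicit computation once the normalization of $\xia_n$ and the shift in \eqref{2.11} are pinned down; the real work is setting up the reduction so that no $n$-dependence leaks into the coefficients.
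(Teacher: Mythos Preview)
Your proposal has the right raw ingredients (\eqref{2.8} and \eqref{2.11}) but the induction you sketch does not actually close, and the ``crux'' you flag---$n$-independence of the coefficients---is precisely the point left unresolved. Applying $\cBz^{a}$ to the difference $\xia_s p_s-\xia_{s-1}p_{s-1}$ does not shift a monomial exponent $q^{js}$ to $q^{(j\pm1)s}$; by \eqref{2.11} it produces the \emph{sum} $\xia_s p_s+\xia_{s-1}p_{s-1}$ multiplied by $q^{-s+1}-abcdq^{s-1}$, so the recursion ``$\cBb_z^{a,q^{(j+1)n}}$ in terms of $\cBz^a\cdot\cBb_z^{a,q^{jn}}$'' is not well-defined as stated. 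Your attempt to rescue this via second solutions and the involution $I_n^{(j)}$ is heading in a workable direction, but you never actually close the loop.

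The paper's argument bypasses all of this with two clean moves you are missing. First, it \emph{differentiates} \eqref{5.15}: the statement is equivalent to the pointwise identity
\[
r(q^n)\xia_n p_n(x)-r(q^{2-n}/abcd)\xia_{n-1}p_{n-1}(x)=\cBb_z^{a,r}\bigl[\xia_n p_n(x)-\xia_{n-1}p_{n-1}(x)\bigr],
\]
so one is defining $\cBb_z^{a,r}$ by its action on the difference, not on $\xia_n p_n$; the integral \eqref{5.15} is then recovered by telescoping. Second, the set $\cA$ of admissible $r$ is not organized by monomial degree but by the involution $t\mapsto q^2/(abcd\,t)$: one checks that $\cA$ is closed under multiplication by \emph{symmetric} $R\in\Cset[q^n+q^{2-n}/abcd]\cap\cA$ (with $\cBb_z^{a,rR}=\cBb_z^{a,r}\cBb_z^{a,R}$), and since any Laurent polynomial splits as $R^1+R^2 r_0$ with $R^1,R^2$ symmetric and $r_0(q^n)=q^n-q^{2-n}/abcd$, it suffices to show $1,\la_n,r_0\in\cA$ and that the symmetric generator $q^n+q^{2-n}/abcd$ lies in their span. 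With this organization, every $\cBb_z^{a,r}$ is manifestly a polynomial in $\cBz$ and $\cBz^a$, so $n$-independence is automatic and your ``crux'' evaporates.
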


\begin{proof}
Without restriction we can assume that $\ell=a$. Note first that \eqref{5.15} for $\ell=a$ is equivalent to show that for every $r(q^n)\in\Cset[q^{n},q^{-n}]$ there exists a $q$-difference operator $\cBb_z^{a,r}$ such that
\begin{equation}\label{5.16}
r(q^n)\xia_np_{n}(x)
-r(q^{2-n}/abcd)\xia_{n-1}p_{n-1}(x)=\cBb_z^{a,r}[\xi^{a}_{n}p_n(x)-\xi^{a}_{n-1}p_{n-1}(x)].
\end{equation}
From the last formula it is clear that the set $\cA$ of all Laurent polynomials $r$ for which there exists a $q$-difference operator $\cBb_z^{a,r}$ satisfying \eqref{5.16} is a vector space over $\Cset$ that contains the constants. Note also that 
\begin{subequations}\label{5.17}
\begin{align}
\la_n&=(q^{-n}-1)(1-abcdq^{n-1})\in\cA,\label{5.17a}\\
r_0(q^{n})&=q^n-\frac{q^{2-n}}{abcd}=-\frac{q}{abcd}(q^{-n+1}-abcdq^{n-1})\in\cA,\label{5.17b}
\end{align}
\end{subequations}
by \eqref{2.8} and \eqref{2.11}, respectively. Moreover, if $r(q^n)\in\cA$ and $R(q^n)\in\Cset[q^n+q^{2-n}/abcd]\cap\cA$ (i.e. $R(q^n)$ is invariant under the change $q^n\rightarrow q^{2-n}/abcd$), then it is easy to see that $rR\in\cA$, because \eqref{5.16} holds with 
$\cBb_z^{a,rR}=\cBb_z^{a,r}\cBb_z^{a,R}$. Since arbitrary $r(q^n)\in\Cset[q^{n},q^{-n}]$ can be written as
\begin{equation}\label{5.18}
r(q^n)=R^1(q^n)+R^2(q^n)r_0(q^n),
\end{equation}
where $r_0$ is given in \eqref{5.17b} and
\begin{subequations}\label{5.19}
\begin{align}
R^1(q^n)&=\frac{1}{2}\left[r(q^n)+r(q^{2-n}/abcd)\right]\in\Cset[q^n+q^{2-n}/abcd],\label{5.19a}\\
R^2(q^n)&=\frac{r(q^n)-r(q^{2-n}/abcd)}{2(q^n-q^{2-n}/abcd)}\in\Cset[q^n+q^{2-n}/abcd],\label{5.19b}
\end{align}
\end{subequations}
we see that it is enough to prove that $R(q^n)=q^n+q^{2-n}/abcd\in\cA$. But 
this follows from equations \eqref{5.17} and the fact that $\cA$ contains the constant polynomials, since
$$R(q^n)=q^n+\frac{q^{2-n}}{abcd}=\frac{2q^2}{abcd(1+q)}\la_n+\frac{1-q}{1+q}r_0(q^n)+
\frac{2q(abcd+q)}{abcd(1+q)}.$$
\end{proof}

\begin{Remark}\label{re5.5}
Note that the above proof can be used to write an explicit formula for the operator $\cBb_z^{\ell,r}$ in \eqref{5.15} as a polynomial of the operators $\cBz$ and $\cBz^{\ell}$ by representing $r(q^n)$ as a sum of symmetric and skew-symmetric parts given in equation \eqref{5.18}. 
\end{Remark}

\subsection{Construction of the $q$-difference operators in $z$}\label{ss5.2}
 
We are now ready to prove that for every $h\in\fAn$ there exists a $q$-difference operator $\cBh_z^{h}$ such that \eqref{3.10b} holds. From \prref{pr5.1} we know that, up to an additive constant, we have 
\begin{equation*}
h(\la_{n-k/2})=\int_{-1}^{n-1}\ep^{(k+2)}_{s+1}g(\la_{s-(k-1)/2})\tau_{s}\dms.
\end{equation*}
We apply \leref{le5.2} with 
\begin{align*}
f^{(0)}_n&=\frac{\chi_n}{\prod_{j=1}^{k+1}\xia_{n-j+1}}\ep^{(k+2)}_{n+1}g(\la_{n-(k-1)/2}),\\
f^{(j)}_n&=\xia_{n}\psi^{(j)}_n\text{ for }j=1,2,\dots,k,\\
f^{(k+1)}_n&=\xia_{n}p_{n}(x).
\end{align*}
If we multiply equation \eqref{5.9} by ${\chi_n}/{\prod_{j=1}^{k+1}\xia_{n-j+1}}$ then, using \eqref{3.2b} and \eqref{3.3} we see that the right-hand is equal to $(h(\la_{n-k/2})+c_0)\ph_n(x)$, where $c_0$ is a constant 
(independent of $n$ and $x$). The goal now is to show that, if we 
choose appropriately the integers $n_j$ in \leref{le5.2}, then there exists 
a $q$-difference operator $\cBh_z$ (with coefficients independent of $n$) such that 
\begin{equation}\label{5.20}
F_n=\cBh_z \,\xia_n p_n(x).
\end{equation}
Indeed, if this equation is true, then we can pull the operator $\cBh_z$ in front of the Wronskian determinant on the left-hand side of \eqref{5.9} and using \eqref{3.2b} again we will obtain the left-hand side of \eqref{3.10b}, thus completing the proof of \eqref{3.10b} (and, up to an additive constant, the operator $\cBh_z$ is the operator $\cBh_z^{h}$).

Suppose first that $k$ is even and let us write $F_n$ as explained in 
\reref{re5.3} (iii).  Note that the last term in the sum \eqref{5.13} representing $F_n$ is of the form $(h(\la_n)+c_0')\xia_n p_n(x)$, where $c_0'$ is a constant (independent of $n$ and $x$), and using \eqref{2.8} we can rewrite it as $(h(\cBz)+c_0')\xia_n p_n(x)$. 
Thus we can consider the sum $F^{(k)}_{n}$ of the first $k$ terms. 
It is enough to show that each term of the form \eqref{5.14a} can be 
represented as $\cBh_z  \,\xia_np_n(x)$ for some $q$-difference operator $\cBh_z$. We prove this by using \leref{le5.4} with $\ell=\de_j$. Note that 
$$r(q^{2-n}/abcd)=I_{n}^{(1)}(r(q^n)),$$
so the key point is to show that the coefficients in front of $\xi^{\de_j}_{s}p_{s}(x)$ and $\xi^{\de_j}_{s-1}p_{s-1}(x)$ in \eqref{5.14a} are connected by the involution $I_{s}^{(1)}$. It is useful to connect the involution $I^{(1)}_{n}$ to the involution $I^{(k-1)}_{n}$, which we used earlier, by the intertwining relation
\begin{equation}\label{5.21}
E_n^{-(k/2+m-1)}\,I_{n}^{(1)}\,E_n^{k/2-m-1}=I_{n}^{(k-1)}.
\end{equation}
We have two possible cases: $\de_j=a$ or $\de_j\in\{b,c,d\}$. 

{\em Case 1:}  $\de_j=a$ (hence $f^{(j)}_n=\phi^{(j)}(\la_n)$). Note that if we set 
$\fb_n={\chi_n}/{\prod_{j=1}^{k+1}\xia_{n-j+1}}$, then 
\begin{equation}\label{5.22}
I_{n}^{(1)}(\fb_{n-m+k/2-1}\De^{1,j}_n)=(-1)^{(k-1)(k-2)/2}\,\fb_{n+m+k/2-1}\De^{2,j}_n,
\end{equation}
where $\De^{1,j}_n$ and $\De^{2,j}_n$ are the determinants given in equations \eqref{5.14b} and \eqref{5.14c}, respectively. Indeed, the same arguments that we used at the beginning of \seref{se5} show that, after we multiply the $i$-th column of $\De^{1,j}_n$ and $\De^{2,j}_n$  by the corresponding factor ($\eta^{\de_i,i}_n\xi^{\de_i}_{n-i+1}/\xia_{n-i+1}$) from $\fb$ for every $i$ such that $\de_i\neq a$, then $I_{n}^{(1)}$ will reverse the order of the rows in these two determinants (and formula \eqref{5.21} shows that the arguments work in the same way with $I_{n}^{(k-1)}$ there replaced by $I_{n}^{(1)}$ here.). Since $\De^{i,j}_n$ are $(k-1)\times(k-1)$ determinants, we obtain \eqref{5.22}. Note also that 
$$I_{n}^{(1)}(g(\la_{n-m-1/2}))=g(\la_{n+m-1/2})$$
and 
$$I_{n}^{(1)}(\ep^{(k+2)}_{n+k/2-m})=(-1)^{(k-1)(k-2)/2}\,\ep^{(k+2)}_{n+k/2+m}.$$
The last equation, follows easily from \eqref{5.5} by considering separately the two possible cases $k\equiv 0$ mod $4$ and $k\equiv 2$ mod $4$.
Since 
$$f^{(0)}_n=\fb_n\,\ep^{(k+2)}_{n+1}g(\la_{n-(k-1)/2}),$$
we see that \leref{le5.4} with $\ell=a$ can be applied for the integral in \eqref{5.14a} and we can rewrite \eqref{5.14a} as 
$$\phi^{(j)}(\la_n)\cBh_z'\xia_n p_n(x),$$
for some $q$-difference operator $\cBh_z'$. We can now commute $\cBh_z'$ and $\phi^{(j)}(\la_n)$ and use \eqref{2.8} to rewrite the above equation as 
$$\cBh_z'\phi^{(j)}(\cBz)\xia_n p_n(x),$$
completing the proof in this case. \\

{\em Case 2:}  $\de_j\in\{b,c,d\}$. Let us fix $\de_j=b$ (the cases $\de_j=c$ or $\de_j=d$ are similar). Then $f^{(j)}_n=\xia_n\phi^{(j)}(\la_n)/\xib_n$. Note that the $j$-th columns of the determinants $\De^{1,j}_n$ and $\De^{2,j}_n$ are removed and therefore the main difference between this and the previous case is that in $f^{(0)}_n$ we have the extra factor $\eta^{b,j}_n\xib_{n-j+1}/\xia_{n-j+1}$. Using the arguments in case 1, we see that the integral in equation \eqref{5.14a} can be rewritten as 
\begin{align*}
&\int^{n}\Big[\frac{\eta^{b,j}_{s-m+k/2-1}\xib_{s-m+k/2-j}}{\xia_{s-m+k/2-j}}r(q^s)\xia_sp_s(x)\\
&\qquad\qquad-\frac{\eta^{b,j}_{s+m+k/2-1}\xib_{s+m+k/2-j}}{\xia_{s+m+k/2-j}}I^{(1)}_{s}(r(q^s))\xia_{s-1}p_{s-1}(x)\Big]\dms\\
&=\int^{n}\Big[\ka^{b;k/2-m,j}_{s-m+k/2-1} r(q^s)\xib_sp_s(x)
-\ka^{b;k/2+m+1,j}_{s+m+k/2-1}I^{(1)}_{s}(r(q^s))\xib_{s-1}p_{s-1}(x)\Big]\dms,
\end{align*}
where in the last equality we used \eqref{5.2a}. In order to apply \leref{le5.4} with $\ell=b$ we need to check that 
$$I^{(1)}_{n}(\ka^{b;k/2-m,j}_{n-m+k/2-1})=\ka^{b;k/2+m+1,j}_{n+m+k/2-1},$$
which follows from equations \eqref{5.3} and \eqref{5.21}. Thus we can construct a $q$-difference operator $\cBh_z'$ such that the expression in \eqref{5.14a} can be rewritten as  
$$f^{(j)}_{n}\cBh_z'\xib_n p_n(x)=\cBh_z'\phi^{(j)}(\la_n)\xia_n p_n(x)
=\cBh_z' \phi^{(j)}(\cBz)\,\xia_n p_n(x),$$
completing the proof when $k$ is even. The case when $k$ is odd follows along the same lines, using the representation of $F_n$ described in \reref{re5.3} (iv).\qed

\begin{Remark}\label{re5.6}
Using \reref{re5.5}, we can write an explicit formula for the $q$-difference operator $\cBh_z^{h}$ in equation \eqref{3.10b} in terms of the operators $\cBz$, $\cBz^{\ell}$, $\ell\in\{a,b,c,d\}$. This might be useful for specific examples or small values of $k$, but the formula becomes very involved in general.
\end{Remark}

%%%%%%%%%%%%%%%%%%%%%%%%%%%%%%%%%%%%%%%%%% Section 6
\section{Bispectral extensions which are orthogonal with respect to a measure on the real line} \label{se6}
%%%%%%%%%%%%%%%%%%%%%%%%%%%%%%%%%%%%%%%%%%
In this section we assume (for simplicity) that the parameters $a,b,c,d$ are such that $\max(|a|,|b|,|c|,|d|)<1$. Then the Askey-Wilson polynomials $\{p_n(x;a,b,c,d)\}_{n=0}^{\infty}$ are mutually orthogonal with respect to the measure $\mu_{a,b,c,d}$ on  $[-1,1]$, defined by 
\begin{subequations}\label{6.1}
\begin{equation}\label{6.1a}
d\mu_{a,b,c,d}=\frac{w(x;a,b,c,d)}{2\pi \sqrt{1-x^2}}\,dx,
\end{equation}
with 
\begin{equation}\label{6.1b}
w(x;a,b,c,d)=\frac{(z^2,z^{-2};q)_{\infty}}{(az,a/z,bz,b/z,cz,c/z,dz,d/z;q)_{\infty}},
\end{equation}
see \cite[Theorem 2.2, p.~11]{AW}.
\end{subequations}

Next we explain how the techniques developed in the paper can be used to obtain extensions of the Askey-Wilson polynomials which eigenfunctions of higher-order $q$-difference operators and which are also mutually orthogonal with respect to a measure on $\Rset$. This  provides a new derivation of the constructions in our joint paper with L.~Haine \cite{HI}. Applying \thref{th3.1} to this situation, we obtain $q$-difference operators of lower-order than the ones constructed from the general theory in \cite{HI}.

The key point is to choose the polynomials $\phi^{(j)}(\la_{\ga})$ so that the space $W$ defined by \eqref{3.7} is $\cLga$ invariant (and therefore, the polynomials $\ph_n(x)$ will satisfy a three-term recurrence relation). The simplest possibility is to have $\psi^{(j)}(\la_{\ga})$ as eigenfunctions for the operator $\cL_{\ga}$, which in view of equation \eqref{4.1} is equivalent to pick the polynomial $\phi^{(j)}(\la_{\ga})$ to be an eigenfunction for the operator $\cL^{\de_j}_{\ga}$ for every $j$. Note that 
\prref{pr4.2} already gives us polynomial eigenfunctions for the operators $\cLga^{\ell}$ where $\ell\in\{a,b,c,d\}$ and therefore one natural choice is, for every $j=1,2,\dots,k$, to take 
\begin{equation}\label{6.2}
\phi^{(j)}(\la_{\ga})=g^{\de_j}_{k_j}(\la_{\ga}), 
\end{equation} 
where $k_j\in\Nset_0$. However, the corresponding extensions of the Askey-Wilson polynomials will depend only on the four parameters $(a,b,c,d)$ and will not contain as a limit the Krall-Jacobi or even the Krall-Laguerre polynomials. 

In order to get more interesting examples, we fix $\ell\in\{a,b,c,d\}$ and we try to find special values of the parameters $a,b,c,d$ for which the difference operator $\cLga^{\ell}$ has two dimensional polynomial eigenspaces (for specific eigenvalues). This would allow us to pick $\phi^{(j)}(\la_{\ga})$ in the corresponding eigenspaces (and they will depend on new free parameters). 

Recall that the Askey-Wilson polynomials $p_n(x)=p_n(x;a,b,c,d)$ are symmetric in the parameters $a,b,c,d$ and therefore we can rewrite formula \eqref{2.2} by exchanging the roles of $a$ and $d$, i.e. we have 
\begin{equation}\label{6.3}
p_n(x)=\frac{(ad,bd,cd;q)_n}{d^n}
\fpt{q^{-n}}{abcdq^{n-1}}{dz}{dz^{-1}}{ad}{bd}{cd}. 
\end{equation}
The ${}_{4}\phi_{3}$ series above clearly terminates for all $n\in\Cset$ if we fix $z=z_m=dq^{m}$ where $m\in\Nset_0$. Set $x_m=\frac{1}{2}(z_m+\frac{1}{z_m})$. Equations \eqref{2.5} and \eqref{4.1} show that 
\begin{equation}\label{6.4}
\xia_n p_n(x_m)=\frac{q^n}{(ad)^n}\frac{(abcd/q,ad;q)_n}{(bc,q;q)_n}
\fpt{q^{-n}}{abcdq^{n-1}}{dz_m}{q^{-m}}{ad}{bd}{cd}
\end{equation}
is an eigenfunction for the operator $\cL_n^{a}$ with eigenvalue $2x_m=z_m+\frac{1}{z_m}$. Moreover, since the ${}_4\phi_{3}$ series terminates, it is easy to see that the meromorphic extension of the expression in \eqref{6.4} is an eigenfunction for the bi-infinite extension $\cL_{\ga}^{a}$ (because after we cancel the exponents and the $q$-shifted factorials in front of the ${}_4\phi_{3}$ series, the eigenfunction equation can be rewritten as a polynomial equation in $q^{\ga}$, which if true for $\ga=n\in\Nset_0$, must be true for all $\ga\in\Cset$). Note also that, if $a=dq^{\al}$ where $\al\in\Nset$, the eigenvalue $z_m+\frac{1}{z_m}=dq^{m}+\frac{1}{dq^m}$ above, and $\frac{a}{q^{j+1}}+\frac{q^{j+1}}{a}=dq^{-j+\al-1}+\frac{1}{dq^{-j+\al-1}}$ in \eqref{4.5} coincide when $m=-j+\al-1$. Moreover, if $ad=q^{\al+l}$ where $l\in\Nset$ then the exponents and the $q$-shifted factorials multiplying the ${}_4\phi_{3}$ series in \eqref{6.4} can be rewritten as a Laurent polynomial in $q^{n}$ as follows
\begin{equation*}
\frac{q^n}{(ad)^n}\frac{(abcd/q,ad;q)_n}{(bc,q;q)_n}=\frac{1}{(bc,q;q)_{\al+l-1}}\,\frac{(bcq^{n},q^{n+1};q)_{\al+l-1}}{(q^{n})^{\al+l-1}}.
\end{equation*} 
It is not hard to see that the above expression is a polynomial of $\la_{n}$ (since it is invariant under the action of the involution $I^{(0)}_n$) and that the eigenfunctions in \eqref{4.4} and \eqref{6.4} are independent.

Summarizing the above comments, we obtain a different proof of Proposition 5.2 in \cite{HI}.

\begin{Proposition}\label{pr6.1}
Assume that $d=\var q^{l/2}$ and $a=dq^{\al}=\var q^{\al+l/2}$, where $\var=\pm1$, and $\al,l\in\Nset$. For $m=0,1,\dots,\al-1$, the polynomials 
\begin{subequations}\label{6.5}
\begin{align}
u^{1,m}(\la_{\ga})&=\fpt{q^{m-\al+1}}{q^{-m+\al+1}/a^2}{q^{\ga+1}}{1/abcdq^{\ga-2}}{q^2/ab}{q^2/ac}{q^2/ad},\label{6.5a}\\
u^{2,m}(\la_{\ga})&=\frac{(bcq^{\ga},q^{\ga+1};q)_{\al+l-1}}{(q^{\ga})^{\al+l-1}}
\fpt{q^{-\ga}}{abcdq^{\ga-1}}{q^{m+l}}{q^{-m}}{ad}{bd}{cd},\label{6.5b}
\end{align}
form a fundamental set of solutions for the eigenvalue problem
\begin{equation}\label{6.5c}
\cLga^{a}u_{\ga}=2x_m u_{\ga}, \quad\text{ where }\quad x_m=\frac{\var}{2}(q^{m+l/2}+q^{-m-l/2}).
\end{equation}
\end{subequations}
\end{Proposition}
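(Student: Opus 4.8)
The plan is to verify directly that the two explicit $_4\phi_3$ expressions in \eqref{6.5a}--\eqref{6.5b} solve the second-order difference equation \eqref{6.5c}, and that they are linearly independent, so that together they span the (two-dimensional) solution space. The operator $\cLga^a$ is a bi-infinite second-order difference operator with $A^a_\ga\neq 0$, $C^a_\ga\neq 0$ for all $\ga$ by our genericity conventions, so the solution space of $\cLga^a u_\ga = 2x_m u_\ga$ is indeed two-dimensional over $\Cset$; producing two independent solutions suffices.

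First I would treat $u^{1,m}$. Under the hypothesis $a = dq^\al$ and $d = \var q^{l/2}$, set $j = \al - m - 1 \in\{0,1,\dots,\al-1\}$; then $q^{m-\al+1} = q^{-j}$ and $q^{-m+\al+1}/a^2 = q^{j+2}/a^2$, so $u^{1,m}(\la_\ga)$ is exactly the polynomial $g^a_j(\la_\ga)$ from \eqref{4.4} (up to the $\ga$-independent normalization $(q^2/ab,q^2/ac,q^2/ad;q)_j$, which is irrelevant here). By \prref{pr4.2}, $\cLga^a g^a_j(\la_\ga) = (a/q^{j+1} + q^{j+1}/a)\,g^a_j(\la_\ga)$, and a direct substitution $j = \al-m-1$, $a = \var q^{\al+l/2}$ gives $a/q^{j+1} + q^{j+1}/a = \var(q^{m+l/2} + q^{-m-l/2}) = 2x_m$. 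So \eqref{6.5c} for $u^{1,m}$ is immediate from the earlier proposition.

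Next I would treat $u^{2,m}$, which is the new ingredient. This is where the argument in the text already does most of the work: one checks that the meromorphic-in-$\ga$ expression \eqref{6.4}, namely $\xia_\ga p_\ga(x_m)$ written via the $a\leftrightarrow d$ symmetric form \eqref{6.3} with $z = z_m = dq^m$, is an eigenfunction of $\cLga^a$ with eigenvalue $2x_m = z_m + 1/z_m$. The reasoning is: for $\ga = n\in\Nset_0$ the terminating $_4\phi_3$ makes \eqref{6.4} literally $\xia_n p_n(x_m)$, which satisfies $\cLn^a(\xia_n p_n(x_m)) = 2x_m \xia_n p_n(x_m)$ by \eqref{2.5} and \eqref{4.1}; after clearing the $q$-shifted factorials in front, the eigenfunction identity becomes a polynomial identity in $q^\ga$ true for all $\ga = n\in\Nset_0$, hence for all $\ga\in\Cset$. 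Then, using the extra hypothesis $ad = q^{\al+l}$ (which holds since $ad = d^2 q^\al = q^{\al+l}$), the scalar prefactor $\frac{q^n}{(ad)^n}\frac{(abcd/q,ad;q)_n}{(bc,q;q)_n}$ collapses, as displayed, to $\frac{1}{(bc,q;q)_{\al+l-1}}\cdot\frac{(bcq^n,q^{n+1};q)_{\al+l-1}}{(q^n)^{\al+l-1}}$, which is (up to the $\ga$-independent constant $1/(bc,q;q)_{\al+l-1}$) the prefactor in $u^{2,m}$. Thus $u^{2,m}(\la_\ga)$ equals $\xia_\ga p_\ga(x_m)$ up to a nonzero constant and therefore also satisfies \eqref{6.5c}. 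One should also note, as remarked in the text, that this prefactor is invariant under $I^{(0)}_\ga$, hence genuinely a polynomial in $\la_\ga$, so $u^{2,m}$ is polynomial and it makes sense to speak of it as an eigenfunction in $\Cset[\la_\ga]$.

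Finally, linear independence: $u^{1,m}$ is (for generic $a$, i.e. $a\notin\{\pm q^{1+s/2}\}$) a polynomial in $\la_\ga$ of degree $j = \al-m-1 < \al+l-1$, while $u^{2,m}$ has degree exactly $\al+l-1$ in $\la_\ga$ (the prefactor already contributes degree $\al+l-1$ and the terminating $_4\phi_3$ factor is a polynomial in $q^{-\ga}$ which, after symmetrization, does not lower this degree). Since $l\in\Nset$, the degrees differ, so $u^{1,m}$ and $u^{2,m}$ are independent, and hence form a fundamental system for \eqref{6.5c}. The only mildly delicate point, and the main place to be careful, is the degree count for $u^{2,m}$ and the verification that $u^{2,m}$ really is a polynomial in $\la_\ga$ rather than merely a Laurent polynomial in $q^\ga$; this is exactly the $I^{(0)}_\ga$-invariance observation, which I would spell out. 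Everything else reduces to the already-established \prref{pr4.2} together with the symmetry \eqref{6.3} of the Askey--Wilson polynomials and the bookkeeping with the constraints $a = dq^\al$, $d = \var q^{l/2}$.
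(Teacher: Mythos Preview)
Your proof is correct and follows essentially the same route as the paper: identify $u^{1,m}$ with $g^a_{\al-m-1}$ via \prref{pr4.2}, obtain $u^{2,m}$ from the meromorphic extension of $\xi^a_\ga p_\ga(x_m)$ using the $a\leftrightarrow d$ symmetric formula \eqref{6.3}, and check independence. One small correction: the ${}_4\phi_3$ in \eqref{6.5b} is itself a polynomial of degree $m$ in $\la_\ga$ (its $s$-th term involves $(q^{-\ga};q)_s(abcdq^{\ga-1};q)_s$, which is $I^{(0)}_\ga$-invariant of degree $s$), so $\deg u^{2,m}=\al+l-1+m$ rather than $\al+l-1$; this only strengthens your degree comparison with $\deg u^{1,m}=\al-m-1$.
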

The polynomials $u^{1,m}$, $u^{2,m}$ above correspond to $s_{\ga}(z_m)$ and $r_{\ga}(z_m)$ in \cite[page~302]{HI}. The connection between $u^{1,m}$ and $s_{\ga}(z_m)$ can be seen by applying Sears' transformation formula 
\cite[page~49, formula (2.10.4)]{GR}.

If the parameters $a$ and $d$ satisfy the conditions in \prref{pr6.1}, then we can pick $\de_{j}=a$ and 
$\phi^{(j)}(\la_{\ga})\in\Span\{u^{1,{m_j}}(\la_{\ga}),u^{2,{m_j}}(\la_{\ga})\}$, for some  $m_j\in\{0,1,\dots,\al-1\}$. Note that $\Span\{\phi^{(j)}(\la_{\ga})\}$ depends on one free parameter. If we make a similar choice for every $j=1,2,\dots,k$, then clearly $\cLga W\subset W$ (since each $\psi^{(j)}_{\ga}$ is an eigenfunction for $\cLga$). We shall assume that the polynomials $\phi^{(j)}(\la_{\ga})$ are generic so that the conditions \eqref{3.6} hold. Therefore $\fAz=\Cset[z+1/z]$ and the polynomials $\{\ph_n(x)\}_{n=0}^{\infty}$ will satisfy the three-term recurrence relation
$$\cLh_n\ph_n(x)=2x\ph_n(x), \quad \text{ where }\quad \cLh_n:=\cLh_n^{z+1/z}.$$

It is well known\footnote{See \cite[Proposition~6.1]{HI}, although this statement can be found already in \cite[pp.~14-19]{Wr} in the case of differential operators.} 
that the intertwining 
relation \eqref{4.7} means that the operator $\cLh_{\ga}$ can be obtained from $\cLga$ by a sequence of $k$ elementary Darboux transformations at the points $2x_{m_j}$. For the semi-infinite operators, this gives 
\begin{align}
&\cL_n=2x_{m_1}\Id+P_n^{(1)}Q_n^{(1)}\curvearrowright \cL^{(1)}_n=2x_{m_1}\Id+Q^{(1)}_nP^{(1)}_n=2x_{m_2}\Id+P^{(2)}_nQ^{(2)}_n
\curvearrowright\cdots                      \nonumber\\
&\quad \cL^{(k-1)}_n=2x_{m_{k-1}}\Id+Q^{(k-1)}_nP^{(k-1)}_n=2x_{m_k}\Id+P^{(k)}_nQ^{(k)}_n
                                \label{6.6}\\
&\qquad\curvearrowright \cLh_n=\cL^{(k)}_n=2x_{m_k}\Id+Q^{(k)}_nP^{(k)}_n,  \nonumber
\end{align}
where $P^{(j)}_n=\fp^{1,j}_nE_n+\fp^{0,j}_n\Id$ are first-order forward difference operators, $Q^{(j)}_n=\fq^{0,j}_n\Id+\fq^{-1,j}_nE_n^{-1}$ are first-order backward difference operators, and 
$$\cQ_n=Q_n^{(k)}Q_n^{(k-1)}\cdots Q_n^{(1)}.$$

If we define polynomials $p^{(j)}_n(x)=Q^{(j)}_n p^{(j-1)}_n(x)$, where $p^{(0)}_n(x)=p_n(x)$ are the Askey-Wilson polynomials, then $\ph_n(x)=p^{(k)}_n(x)$ are the polynomials constructed in \seref{se3}. Using \eqref{6.6}, it is not hard to prove inductively\footnote{See Theorem 2 in \cite{GHH}.} that the polynomials $\{p^{(j)}_n(x)\}_{n=0}^{\infty}$ are orthogonal with respect to a measure of the form
$$d\mu^{(j)}=\frac{d\mu_{a,b,c,d}}{\prod_{s=1}^{j}(x-x_{m_s})}+\sum_{s=1}^{j}\nu_s\de(x-x_{m_s}),$$
where $\mu_{a,b,c,d}$ is the Askey-Wilson measure in \eqref{6.1} and the constants $\nu_1,\dots,\nu_j$ are in one-to-one correspondence with the free parameters that specify the one dimensional subspace $\Span\{\phi^{(j)}(\la_{\ga})\}$ of $\Span\{u^{1,{m_j}}(\la_{\ga}),u^{2,{m_j}}(\la_{\ga})\}$ at each step. Putting all this together, we obtain the following theorem (c.f. Theorem 6.2 in \cite{HI}).

\begin{Theorem}\label{th6.2}
Suppose that $d=\var q^{l/2}$ and $a=\var q^{\al+l/2}$, where $\var=\pm1$, and $\al,l\in\Nset$ and let $k\leq \al$. 
\item[(I)]
For each $j=1,2,\dots, k$ pick $\de_j=a$ and 
\begin{equation}\label{6.7}
\phi^{(j)}(\la_{\ga})\in\Span\{u^{1,{m_j}}(\la_{\ga}),u^{2,{m_j}}(\la_{\ga})\}
\end{equation}
with $m_j\in\{0,1,\dots,\al-1\}$ and $m_j\neq m_i$ for $j\neq i$. Then for generic $\{\phi^{(j)}(\la_{\ga})\}$ as in \eqref{6.7}, the polynomials $\ph_n(x)$ defined by \eqref{3.1}-\eqref{3.2a} are orthogonal with respect to a complex measure of the form 
\begin{equation}\label{6.8}
d\hat{\mu}=\frac{d\mu_{\var q^{\al+l/2},b,c,\var q^{l/2}}}{\prod_{j=1}^{k}(x-x_{m_j})}+\sum_{j=1}^{k}\nu_j\de(x-x_{m_j})\text{ with }
x_{m_j}=\frac{\var}{2}(q^{m_j+l/2}+q^{-m_j-l/2}).
\end{equation}
\item[(II)] Conversely, if $m_j\in\{0,1,\dots,\al-1\}$ are distinct numbers, then for generic complex numbers $\nu_1,\dots,\nu_k$ we can use \eqref{3.1}-\eqref{3.2a} to construct polynomials $\{\ph_n(x)\}_{n=0}^{\infty}$ orthogonal with respect to the complex measure in \eqref{6.8} by picking $\de_j=a$ and an appropriate $\phi^{(j)}(\la_{\ga})$ as in \eqref{6.7} for every $ j=1,2,\dots,k$.
\end{Theorem}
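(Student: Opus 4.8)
The plan is to prove \thref{th6.2} by combining the Darboux-factorization picture in \eqref{6.6} with a careful bookkeeping of the spectral data at each elementary step, exactly as indicated in the paragraphs preceding the statement. First I would set up part (I). Under the stated conditions on $a,d$, \prref{pr6.1} applies, so for each $m_j\in\{0,1,\dots,\al-1\}$ the operator $\cLga^a$ has a two-dimensional polynomial eigenspace $\Span\{u^{1,m_j},u^{2,m_j}\}$ for the eigenvalue $2x_{m_j}$. Picking $\de_j=a$ and $\phi^{(j)}(\la_\ga)$ in this span makes each $\psi^{(j)}_\ga=\phi^{(j)}(\la_\ga)/\xia_\ga$ an eigenfunction of $\cLga$ with eigenvalue $2x_{m_j}$; hence $W=\Span\{\psi^{(1)}_\ga,\dots,\psi^{(k)}_\ga\}$ is $\cLga$-invariant, which forces $\fAz=\Cset[z+1/z]$ and gives the three-term recurrence $\cLh_n\ph_n(x)=2x\ph_n(x)$ for $\cLh_n:=\cLh_n^{z+1/z}$. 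Since the $m_j$ are distinct, the eigenvalues $2x_{m_j}$ are distinct, so the kernel of $\cQ_n$ decomposes as a direct sum of the one-dimensional eigenspaces $\Span\{\psi^{(j)}_\ga\}$; this is precisely what lets one factor $\cQ_n=Q_n^{(k)}\cdots Q_n^{(1)}$ into first-order backward difference operators, each $Q_n^{(j)}$ annihilating $\psi^{(j)}$ after the previous transformations have been applied, and correspondingly factor $\cLh_n$ into a chain of $k$ elementary Darboux transformations of $\cLn$ at the points $2x_{m_j}$ as displayed in \eqref{6.6}.

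With the factorization in hand, the measure statement follows by induction on $j$. The base case is the Askey-Wilson orthogonality \eqref{6.1}. For the inductive step, an elementary Darboux transformation $\cLn=2x_{m}\Id+P_n Q_n\curvearrowright 2x_m\Id+Q_nP_n$ at a point $2x_m$ below the spectrum transforms a measure $d\mu$ into $d\mu/(x-x_m)+\nu\,\de(x-x_m)$, with the weight $\nu$ of the added mass point being the free parameter that records which one-dimensional subspace of the two-dimensional eigenspace was chosen to build $Q_n^{(j)}$; this is the content of the cited Theorem 2 in \cite{GHH}, adapted to the present $q$-difference setting. Applying this $k$ times yields \eqref{6.8}, and the map from the tuple of chosen subspaces $\Span\{\phi^{(j)}\}\subset\Span\{u^{1,m_j},u^{2,m_j}\}$ to the tuple $(\nu_1,\dots,\nu_k)$ is the promised one-to-one correspondence. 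The genericity hypotheses \eqref{3.6} are used to guarantee that $\tau_n\neq 0$ for $n\geq -1$ and that the leading coefficients $Q_n^{(0)}$ do not vanish, so that every elementary step is a bona fide Darboux transformation and the polynomials $\ph_n(x)$ are well-defined for all $n\in\Nset_0$.

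For part (II), I would run the same chain of elementary Darboux transformations in the opposite direction: starting from the Askey-Wilson operator $\cLn$ and the prescribed distinct points $x_{m_j}$, and given generic target masses $\nu_1,\dots,\nu_k$, one builds at the $j$-th step the first-order operator $Q_n^{(j)}$ whose kernel is the one-dimensional eigenspace of the current operator $\cL_n^{(j-1)}$ at eigenvalue $2x_{m_j}$ corresponding to the mass $\nu_j$ — this eigenspace lies inside the two-dimensional polynomial eigenspace $\Span\{u^{1,m_j},u^{2,m_j}\}$ transported through the previous transformations, and one reads off the corresponding $\phi^{(j)}(\la_\ga)$ as in \eqref{6.7}. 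Genericity of the $\nu_j$ ensures that at each step the relevant "prolongation" determinant is nonzero, so the chosen one-dimensional subspace is transverse to the ones used at earlier steps and the $\psi^{(j)}$ remain independent; then \eqref{3.1}-\eqref{3.2a} reproduce exactly the polynomials orthogonal with respect to \eqref{6.8}.

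The main obstacle I anticipate is the bookkeeping in the inductive step of part (I): one must verify that, after $j-1$ Darboux transformations, the eigenspace of $\cL^{(j-1)}_n$ at $2x_{m_j}$ is still two-dimensional and spanned by the appropriate images of $u^{1,m_j},u^{2,m_j}$, and that choosing a line in it is equivalent to choosing the mass $\nu_j$ at $x_{m_j}$ with no hidden interaction between the masses at different points. This is where the distinctness $m_j\neq m_i$ is essential — it keeps the transformations at different spectral points "commuting" in the sense that the eigenspace at $2x_{m_j}$ is unaffected (except for the obvious factor $x-x_{m_i}$) by a Darboux step at $2x_{m_i}$ — and where one has to be careful that the "generic" locus in part (I) (generic $\phi^{(j)}$) and in part (II) (generic $\nu_j$) correspond under the parametrization. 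The rest is a routine, if lengthy, translation of the differential-operator arguments of \cite{GHH,HI} to the $q$-difference Askey-Wilson setting, where all the needed $q$-difference identities are already assembled in \seref{se2} and \seref{se5}.
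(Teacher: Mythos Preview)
Your proposal is correct and follows essentially the same approach as the paper: you use \prref{pr6.1} to make each $\psi^{(j)}_\ga$ an eigenfunction of $\cLga$, deduce $\fAz=\Cset[z+1/z]$ and the three-term recurrence, invoke the intertwining relation \eqref{4.7} to factor the passage $\cLn\to\cLh_n$ as the chain \eqref{6.6} of elementary Darboux transformations at the points $2x_{m_j}$, and then argue inductively via Theorem~2 of \cite{GHH} that each step divides the absolutely continuous part by $(x-x_{m_j})$ and adds a point mass $\nu_j\de(x-x_{m_j})$, with the one-to-one correspondence between the line $\Span\{\phi^{(j)}\}$ and $\nu_j$ giving both (I) and (II). This is exactly the argument sketched in the paragraphs preceding \thref{th6.2}, including the same two external citations (\cite[Proposition~6.1]{HI} for the Darboux factorization and \cite[Theorem~2]{GHH} for the measure transformation).
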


We can use \thref{th3.1} to deduce that the polynomials $\ph_n(x)$, which are orthogonal with respect to the measure in \eqref{6.8}, are eigenfunctions of the $q$-difference operators in the algebra $\fDz$. In the above theorem, we can replace $a$ and $d$ with any two of the parameters $(a,b,c,d)$. In particular, if two of the parameters satisfy the above conditions with $\var=1$, and the other two satisfy analogous condition with $\var=-1$, we can add point masses on both sides outside of the interval $[-1,1]$. More precisely, suppose that 
$b=\var q^{\be+t/2}$, $c=\var q^{t/2}$, where $\be,t\in\Nset$ and $\var=\pm1$. Then for $m=0,1,\dots,\be-1$ we can define functions $v^{1,m}(\la_{\ga})$ and $v^{2,m}(\la_{\ga})$ by formulas \eqref{6.5a} and \eqref{6.5b}, respectively, by replacing $(a,b,c,d)$ with $(b,a,d,c)$, $\al$ with $\be$, and $l$ with $t$. This leads to the following theorem (cf. Corollary 6.6 in \cite{HI}).

\begin{Theorem}\label{th6.3}
Suppose that $a=q^{\al+l/2}$, $b= -q^{\be+t/2}$, $c=-q^{t/2}$, $d=q^{l/2}$, where $\al,\be,l,t\in\Nset$. Let $k_1,k_2\in\Nset$ be such that $k_1\leq \al$, $k_2\leq \be$ and set $k=k_1+k_2$. 
\item[(I)] For each $j=1,2,\dots, k_1$ pick $\de_j=a$ and 
\begin{equation}\label{6.9}
\phi^{(j)}(\la_{\ga})\in\Span\{u^{1,{m^1_j}}(\la_{\ga}),u^{2,{m^1_j}}(\la_{\ga})\}
\end{equation}
with $m^1_j\in\{0,1,\dots,\al-1\}$ and $m^1_j\neq m^1_i$ for $j\neq i$, and for each $j=k_1+1,\dots, k$ pick $\de_j=b$ and 
\begin{equation}\label{6.10}
\phi^{(j)}(\la_{\ga})\in\Span\{v^{1,{m^2_j}}(\la_{\ga}),v^{2,{m^2_j}}(\la_{\ga})\}
\end{equation}
with $m^2_j\in\{0,1,\dots,\be-1\}$ and $m^2_j\neq m^2_i$ for $j\neq i$. 
Then for generic $\{\phi^{(j)}(\la_{\ga})\}$ as in \eqref{6.9}-\eqref{6.10}, the polynomials $\ph_n(x)$ defined by \eqref{3.1}-\eqref{3.2a} are orthogonal with respect to a complex measure of the form 
\begin{equation}\label{6.11}
\begin{split}
d\hat{\mu}&=\frac{d\mu_{q^{\al+l/2},-q^{\be+t/2},-q^{t/2},q^{l/2}}}{\prod_{j=1}^{k_1}(x-x^{+}_{m^1_j})\prod_{j=1}^{k_2}(x-x^{-}_{m^2_j})}+\sum_{j=1}^{k_1}\nu^{1}_j\de(x-x^{+}_{m^{1}_j})+\sum_{j=1}^{k_2}\nu^2_j\de(x-x^{-}_{m^{2}_j}),\text{ with }\\
&\qquad x^{+}_{m^1_j}=\frac{1}{2}(q^{m^1_j+l/2}+q^{-m^1_j-l/2}) \text{ and }
x^{-}_{m^2_j}=-\frac{1}{2}(q^{m^2_j+t/2}+q^{-m^2_j-t/2}).
\end{split}
\end{equation}
\item[(II)] Conversely, suppose that $m^1_j\in\{0,1,\dots,\al-1\}$, $m^2_j\in\{0,1,\dots,\be-1\}$ and $m^{r}_j\neq m^{r}_i$ for $j\neq i$, $r=1,2$. Then for generic complex numbers $\nu^{1}_1,\dots,\nu^{1}_{k_1},\nu^{2}_1,\dots,\nu^{2}_{k_2}$ we can use \eqref{3.1}-\eqref{3.2a} to construct polynomials $\{\ph_n(x)\}_{n=0}^{\infty}$ orthogonal with respect to the complex measure in \eqref{6.11}
by picking
\begin{itemize}
\item $\de_j=a$ and an appropriate $\phi^{(j)}(\la_{\ga})$ as in \eqref{6.9} for every $ j=1,2,\dots,k_1$,
\item $\de_j=b$ and an appropriate $\phi^{(j)}(\la_{\ga})$ as in  \eqref{6.10} for every $ j=k_1+1,\dots,k$. 
\end{itemize}
\end{Theorem}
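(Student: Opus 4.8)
The plan is to run the argument leading to \thref{th6.2} twice in parallel --- once for the pair $(a,d)$ with $\var=+1$ and once for the pair $(b,c)$ with $\var=-1$ --- and to check that the two resulting chains of Darboux transformations do not interfere. First I would assemble the eigenfunction data. \prref{pr6.1}, applied directly, gives $u^{1,m}(\la_{\ga}),u^{2,m}(\la_{\ga})$ as a fundamental set of solutions of $\cLga^{a}u_{\ga}=2x^{+}_{m}u_{\ga}$ with $x^{+}_{m}=\frac12(q^{m+l/2}+q^{-m-l/2})$ for $m=0,\dots,\al-1$; applying \prref{pr6.1} after replacing $(a,b,c,d)$ by $(b,a,d,c)$, $\al$ by $\be$ and $l$ by $t$, with $\var=-1$, gives $v^{1,m}(\la_{\ga}),v^{2,m}(\la_{\ga})$ as a fundamental set of solutions of $\cLga^{b}v_{\ga}=2x^{-}_{m}v_{\ga}$ with $x^{-}_{m}=-\frac12(q^{m+t/2}+q^{-m-t/2})$ for $m=0,\dots,\be-1$. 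Because $0<q<1$ and $\al,\be,l,t\in\Nset$, each $x^{+}_{m}>1$ while each $x^{-}_{m}<-1$; together with the assumption that the $m^{1}_j$ (resp.\ $m^{2}_j$) are distinct, and with $k_1\le\al$, $k_2\le\be$, this makes the $k=k_1+k_2$ eigenvalues $2x^{+}_{m^{1}_1},\dots,2x^{+}_{m^{1}_{k_1}},2x^{-}_{m^{2}_{k_1+1}},\dots,2x^{-}_{m^{2}_{k}}$ pairwise distinct and all outside $[-1,1]$.

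Next I would conclude $\fAz=\Cset[z+1/z]$. With the choices \eqref{6.9}--\eqref{6.10}, relation \eqref{4.1} shows that each $\psi^{(j)}_{\ga}=\phi^{(j)}(\la_{\ga})/\xi^{\de_j}_{\ga}$ is an eigenfunction of $\cLga$, with eigenvalue $2x^{+}_{m^{1}_j}$ for $j\le k_1$ and $2x^{-}_{m^{2}_j}$ for $j>k_1$; hence the space $W$ of \eqref{3.7} is $\cLga$-invariant. For generic $\{\phi^{(j)}\}$ the genericity conditions \eqref{3.6} hold, so $\fAz=\Cset[z+1/z]$ and the polynomials $\ph_n(x)$ satisfy the three-term recurrence $\cLh_n\ph_n(x)=2x\ph_n(x)$ with $\cLh_n:=\cLh^{z+1/z}_n$. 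The reasoning used around \eqref{6.6} --- that the intertwining relation \eqref{4.7} forces $\cLh_{\ga}$ to be a composition of $k$ elementary Darboux transformations of $\cLga$ at the points $2x^{+}_{m^{1}_1},\dots,2x^{+}_{m^{1}_{k_1}},2x^{-}_{m^{2}_{k_1+1}},\dots,2x^{-}_{m^{2}_{k}}$ --- then applies, and running the induction of \cite{GHH} step by step, so that each step divides the current measure by one factor $(x-x^{\pm}_{m})$ and adds one point mass at that point, produces (since the $k$ Darboux points are distinct and lie outside $[-1,1]$, so the order of the steps is immaterial) exactly the measure \eqref{6.11}; the constant introduced at the $j$-th step ranges over $\Cset$ as the line $\Span\{\phi^{(j)}(\la_{\ga})\}$ varies, which proves (I).

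For (II) I would invert the correspondence established in (I): given distinct $m^{1}_j\in\{0,\dots,\al-1\}$, distinct $m^{2}_j\in\{0,\dots,\be-1\}$ and generic complex $\nu^{1}_j,\nu^{2}_j$, the measure \eqref{6.11} is obtained from $\mu_{q^{\al+l/2},-q^{\be+t/2},-q^{t/2},q^{l/2}}$ by $k$ one-point Darboux steps, one per point mass, and at each step $\nu^{\pm}_j$ is a generically invertible rational function of the free parameter that specifies the line $\Span\{\phi^{(j)}(\la_{\ga})\}$ inside $\Span\{u^{1,m^{1}_j},u^{2,m^{1}_j}\}$ (resp.\ $\Span\{v^{1,m^{2}_j},v^{2,m^{2}_j}\}$); picking $\de_j$ and $\phi^{(j)}$ accordingly and substituting into \eqref{3.2a} yields the required $\{\ph_n(x)\}_{n=0}^{\infty}$. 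Finally, since $\fAz=\Cset[z+1/z]$ in both parts, \thref{th3.1} applies directly and shows that these orthogonal polynomials are simultaneously diagonalized by the nontrivial commutative algebra $\fDz$ of $q$-difference operators in $z$. The point I expect to require genuine care is the compatibility of the two simultaneous parameter restrictions with the genericity conditions \eqref{3.6}, together with the verification --- easy here because the two families of Darboux points lie on opposite sides of $[-1,1]$ --- that interleaving the two chains never causes a collision of spectral points; everything else is a faithful transcription of the proof of \thref{th6.2}.
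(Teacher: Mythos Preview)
Your proposal is correct and follows essentially the same approach as the paper: the paper does not give a separate proof of \thref{th6.3} but simply observes, in the paragraph preceding it, that one can run the argument of \thref{th6.2} with the pair $(a,d)$ and $\var=+1$ and simultaneously with the pair $(b,c)$ and $\var=-1$, defining $v^{1,m},v^{2,m}$ by the indicated substitution in \eqref{6.5a}--\eqref{6.5b}. Your added remark that the two families of Darboux points lie on opposite sides of $[-1,1]$ (so the $k$ eigenvalues are automatically distinct and the interleaving of the two chains is harmless) makes explicit a point the paper leaves implicit.
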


\begin{Remark}\label{re6.4}
As we noted earlier, we can use \eqref{6.2} to add Dirac measures without free parameters. We can mix this with the constructions in Theorems \ref{th6.2} and \ref{th6.3} to get even more complicated examples of orthogonal polynomials satisfying higher-order $q$-difference equations. 
\end{Remark}

\begin{Remark}\label{re6.5}
Recall that the substitution 
$$a=q^{\al+1/2}, \quad b=-q^{\be+1/2}, \quad c=-q^{1/2},\quad d=q^{1/2},$$
in the Askey-Wilson polynomials leads to $q$-extensions of the Jacobi polynomials, see \cite{Ra}. In particular, this explains how the polynomials $\{\ph_n(x)\}_{n=0}^{\infty}$ in Theorems \ref{th6.2} and \ref{th6.3} can be considered as extensions of the Krall-Jacobi polynomials.
\end{Remark}

\begin{Example}\label{ex6.6}
Consider the simplest application of \thref{th6.2} by taking $\ep=\al=l=1$, or equivalently $a=q^{3/2}$, $d=q^{1/2}$, $b,c$ arbitrary free parameters. Then $k=1$, $m_1=0$ and 
$$\phi^{(1)}(\la_{\ga})=u^{1,0}(\la_{\ga})+\bar{\nu}_1u^{2,0}(\la_{\ga})=1+\nu'_1\frac{(1-bcq^{\ga})(1-q^{\ga+1})}{q^{\ga}},$$
where $\nu'_1$ is a free parameter. Then $\tau_n=\phi^{(1)}(\la_{n})$ is a linear polynomial in $\la_n$ and therefore the algebra $\fAn$ will be generated by two polynomials of degrees $2$ and $3$, respectively. This means that the corresponding algebra $\fDz$ will be generated by two $q$-difference operators of orders $4$ and $6$, respectively. All this follows immediately from \thref{th3.1}, while the operator of minimal order that we can obtain from the general theory in \cite{HI} is of order $12$, see \cite[Section 7]{HI}. 

This example can be considered as a $q$-extension of the Krall-Jacobi polynomials \cite{Kr2}. The polynomials $\ph_n(x)$ are orthogonal with respect to a measure of the form
$$d\hat{\mu}=\frac{d\mu_{q^{3/2},b,c,q^{l/2}}}{(x-x_{0})}+\nu_1\de(x-x_{0})\text{ with }
x_{0}=\frac{1}{2}(q^{1/2}+q^{-1/2}).$$
Note that $x-x_0=-\frac{1}{2q^{1/2}}(1-zq^{1/2})(1-q^{1/2}/z)$, and therefore $\hat{\mu}$ can also be rewritten as
$$d\hat{\mu}=-2\sqrt{q}\,d\mu_{q^{1/2},b,c,q^{l/2}}+\nu_1\de(x-x_{0}).$$
The connection between $\nu'_{1}$ and $\nu_1$ can be derived from the orthogonality condition $\ph_1(x)\perp 1$.
\end{Example}

We conclude with several remarks concerning different constructions in the literature. 

\begin{Remark}
The first families of orthogonal polynomials which are eigenfunctions of higher-order $q$-difference operators, extending the little $q$-Jacobi and Laguerre polynomials were obtained already in \cite{GH3,VZ}.
\end{Remark}

\begin{Remark}\label{re6.8}
The results in \cite{HI} are formulated for a meromorphic (in $n$) extension of the polynomials $\ph_n(x)$.  The main difference with the formulation here is that in equation \eqref{3.2a} we need to replace $p_n(x)$ by a meromorphic extension of the Askey-Wilson polynomials (which can be written in terms of ${}_8\phi_7$ basic hypergeometric series \cite{IR,KSt,S} or can be defined recursively \cite{GH2}). The fact that the extended functions $\ph_n(x)$ satisfy bispectral equations can be deduced from \thref{th3.1} by analytic continuation. For details, we refer the reader to \cite{GI} where similar arguments were used in a multivariable setting.
\end{Remark}

\begin{Remark}
In the recent preprint \cite{D}, methods similar to the ones in \cite{I2,I3} were used to construct bispectral polynomials in the simplest case $k=1$ extending the discrete classical orthogonal polynomials (Charlier, Meixner, Krawtchouk and Hahn). When $k=1$, the right-hand side of equation \eqref{3.2a} is a $2\times 2$ determinant and can simply be written as a linear combination of $p_n(x)$ and $p_{n-1}(x)$. Most of the computations in this case can be done directly bypassing \leref{le5.2}, \reref{re5.3} and \leref{le5.4}. The corresponding polynomials can be obtained from the ones considered here by taking $k=1$ and by reducing the Askey-Wilson polynomials to the corresponding family of discrete classical polynomials.
\end{Remark}

\begin{Remark}
Note that the discrete parts of the measures in Theorems \ref{th6.2} and \ref{th6.3} are supported at specific points of the form $\pm\frac{1}{2}(q^{s/2}+q^{-s/2})$, $s\in \Nset$,
outside of the interval $[-1,1]$. They accumulate to the boundary 
$\pm 1$ only in the limit $q\rightarrow 1$. In the recent paper \cite{AS}, the authors consider different extensions of the Askey-Wilson polynomials, not related to the bispectral problem, by adding mass points at the end points $\pm 1$ of the interval $[-1,1]$.
\end{Remark}

\end{document}